\newcommand{\doi}[1]{\url{https://doi.org/#1}}
\title[Shokurov's index conjecture for quotient singularities]
{Shokurov's index conjecture for quotient singularities}
\dedicatory{Dedicated to Professor Vyacheslav V.\ Shokurov, on the~occasion of his~seventieth~birthday}
\author{Yusuke Nakamura}
\address{Graduate School of Mathematical Sciences, 
the University of Tokyo, 3-8-1, Komaba, Meguro-ku, Tokyo, 153-8914, Japan.}
\email{nakamura@ms.u-tokyo.ac.jp}
\author{Kohsuke Shibata}
\address{Department of Mathematics, School of Engineering, 
Tokyo Denki University, Adachi-ku, Tokyo 120-8551, Japan.}
\email{shibata.kohsuke@mail.dendai.ac.jp}
\subjclass[2020]{Primary 14E18; Secondary 14E30, 14B05}
\keywords{minimal log discrepancy, Gorenstein index, Shokurov's index conjecture}
\newtheorem{thm}{Theorem}[section]
\newtheorem{prop}[thm]{Proposition}
\theoremstyle{definition}
\newtheorem{defi}[thm]{Definition}
\newtheorem{conj}[thm]{Conjecture}
\theoremstyle{remark}
\newtheorem{rmk}[thm]{Remark}
\newtheorem*{ackn}{Acknowledgements}
\begin{document}
\begin{abstract}
We prove Shokurov's index conjecture for quotient singularities. 
\end{abstract}

\maketitle

\section{Introduction}
Shokurov conjectured that the Gorenstein index of a 
$\mathbb{Q}$-Gorenstein germ can be bounded in terms of its dimension and minimal log discrepancy. 
\begin{conj}[Shokurov, cf.\ \cite{Kaw15}*{Question 5.2}]\label{conj:index}
For any $n \in \mathbb{Z}_{> 0}$ and $a \in \mathbb{R}_{\ge 0}$, there exists a positive integer $r(n,a)$ with the following condition. 
\begin{itemize}
\item If an $n$-dimensional $\mathbb{Q}$-Gorenstein variety $X$ and a closed point $p \in X$ satisfy 
$\operatorname{mld}_p(X) = a$, 
then the Cartier index of $K_X$ at $p$ is at most $r(n,a)$. 
\end{itemize}
\end{conj}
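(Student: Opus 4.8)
The plan is to localise the problem, translate the index bound into a statement about discrepancies of valuations over $(X\ni p)$, and then extract the required uniformity from a structural analysis of the singularity; in full generality the last step must rest on deep boundedness inputs, and I will flag where that happens.

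\emph{Reduction to a local statement.} After shrinking $X$ to an affine (or analytic) neighbourhood of $p$ with no other singular point, fix a Weil divisor $D\sim K_X$ whose Cartier index at $p$ is the integer $r=r(X,p)$ to be bounded by a function of $n=\dim X$ and $a=\operatorname{mld}_p(X)$. The hypothesis $\operatorname{mld}_p(X)=a\ge 0$ forces $(X\ni p)$ to be log canonical, and klt when $a>0$, so we may work inside a class where minimal model program techniques apply.

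\emph{The index as a denominator bound.} For a resolution $\mu\colon Y\to X$ with $K_Y=\mu^*K_X+\sum a_iE_i$, the integer $r$ is the least common multiple of the denominators of the discrepancy coefficients $a_i$ over the $E_i$ lying over $p$ (independently of $\mu$, and the same lcm persists over all divisors over $(X\ni p)$). Hence bounding $r$ is equivalent to producing a uniform $N=N(n,a)$ with $Na_E(X)\in\mathbb{Z}$ for every divisor $E$ over $(X\ni p)$. The mld controls only the \emph{smallest} $a_E(X)$, so the first genuine difficulty is to pass from the minimiser to the whole discrepancy spectrum. A natural tool is the arc-theoretic formula for the mld, which recovers $\operatorname{mld}_p(X)$ from the codimensions of the contact loci $\operatorname{Cont}^{\ge m}(\mathfrak{m}_p)$ in the arc space $X_\infty$ and, more usefully, exhibits a divisorial valuation $v$ computing the mld with a generating sequence of bounded complexity; one would try to promote this to a bounded family of valuations that jointly pin down $r$.

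\emph{Uniformity via degeneration, and the obstacle.} To turn this into an actual bound one wants a uniform geometric model of $(X\ni p)$. The natural construction is the Rees-type degeneration attached to a computing valuation $v$ — a test configuration of the singularity — which degenerates $(X\ni p)$ to a log Fano cone $C(S,L)$ over a log Fano pair $S$; on a toric or quotient model the index and the mld are both explicit, being a Reid--Tai/age computation in finitely many weights and the order of a finite group, and are bounded once those weights and that order are. The heart of the matter, and where I expect the argument to stall in full generality, is exactly the boundedness of this limit: one must bound the combinatorial and geometric data of $C(S,L)$ in terms of $n$ and $a$. This is where one would invoke the ACC for minimal log discrepancies and for log canonical thresholds, the theory of bounded complements, and BAB-type boundedness of $S$ — all of which hold in dimension $\le 3$ but remain open beyond it. Accordingly the case in which every step above is unconditional, and the case this paper settles, is that of quotient singularities: there $(X\ni p)\cong(\mathbb{C}^n/G\ni\bar{0})$ to begin with, both $r$ and $a$ are explicit functions of the linear $G$-action — orders of determinant characters and minimal ages — and the conjecture collapses to a finite group-theoretic optimisation that can be carried out directly.
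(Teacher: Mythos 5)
The statement here is a conjecture, and the paper does not claim to prove it in general: it establishes it only for quotient singularities (Theorem \ref{thm:quotp}), which is also the only case you claim to settle. Your discussion of the general case --- recasting the index as a denominator bound on the whole discrepancy spectrum, degenerating to a log Fano cone, and appealing to ACC, complements, and BAB-type boundedness --- is a fair survey of why the conjecture is hard, and you correctly flag that those inputs are open in higher dimension. So the only portion of your proposal that has to function as an actual proof is the final claim about quotient singularities, and there you have a genuine gap.

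You assert that for $(\mathbb{A}^n_k/G \ni x_0)$ the conjecture ``collapses to a finite group-theoretic optimisation that can be carried out directly.'' It does not: for fixed $(n,a)$ the order of $G$ is unbounded (already the cyclic quotients $\frac{1}{d}(1,d-1)$ all have $\operatorname{mld}=1$ and Gorenstein index $1$ for every $d$), so there is no finite list of groups to optimise over; the entire content of the theorem is to bound the order $d(G)$ of the determinant character --- which by Weston's theorem equals the Gorenstein index --- even though $\#G$ is not bounded. The paper's argument for this is genuinely nontrivial. By Jordan's theorem $G$ contains an abelian normal subgroup $H$ with $[G:H]\le c_n$; one chooses $g$ with $\operatorname{mld}_{x_0}(\mathbb{A}^n_k/G)=\operatorname{age}'(g)$ and $h$ with $\operatorname{mld}_{x_0}(\mathbb{A}^n_k/H)=\operatorname{age}'(h)$, forms the averaged element $h'=h\cdot(ghg^{-1})\cdots(g^{c_n!-1}hg^{-(c_n!-1)})\in H$, which commutes with $g$, and applies Ambro's toric index theorem to the abelian group $\langle g,h'\rangle$ (whose mld is still $a$) to bound the order of $\det(h)$. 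This forces $\operatorname{mld}_{x_0}(\mathbb{A}^n_k/H)$ into a finite set depending only on $(n,a)$, so a second application of Ambro's theorem to $H$ bounds $d(H)$, and finally $d(G)$ divides $c_n!\cdot d(H)$. None of these steps --- Jordan's theorem, the commuting-element construction, the two-stage reduction to the abelian (toric) case --- appears in your proposal, and without them the quotient case, which is the paper's theorem, is not proved.
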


\noindent
Conjecture \ref{conj:index} may be useful to attack the ACC conjecture (see \cite{Sho04}) 
since the ACC conjecture is known for varieties with a fixed Gorenstein index 
and boundaries with fixed coefficients (\cite{Nak16b}). 
Conjecture \ref{conj:index} is known to be true in the following cases: 
\begin{enumerate}
\item[({\ref{conj:index}}.1)] 
When $(n,a) = (2,0)$, Conjecture \ref{conj:index} can be confirmed 
by the classification of surface singularities (cf.\ \cite{Sho93}*{Corollary 5.10}).

\item[({\ref{conj:index}}.2)] 
When $n = 2$, Conjecture \ref{conj:index} is proved by G.\ Chen and J.\ Han \cite{CH21}*{Theorem 1.4}. 
They proved Conjecture \ref{conj:index} for pairs (see \cite{CH21}*{Conjecture 6.3} for the generalized formulation). 

\item[({\ref{conj:index}}.3)] 
When $(n,a) = (3,0)$, Conjecture \ref{conj:index} is proved by Ishii in \cite{Ish00} and Fujino in \cite{Fuj01}. 
Fujino in \cite{Fuj01} also proved that Conjecture \ref{conj:index} for $a=0$ and arbitrary $n$ assuming 
another conjecture on some boundedness of the birational automorphisms (see \cite{Fuj01}*{Theorem 0.2} for the detail). 

\item[({\ref{conj:index}}.4)] 
When the considered $X$ is a terminal threefold, 
Conjecture \ref{conj:index} follows from the classification by Kawamata \cite{Kaw92}. 
J.\ Han, J.\ Liu, and Y.\ Luo in \cite{HLL}*{Theorem 1.5} proved Conjecture \ref{conj:index} for three-dimensional terminal pairs. 

\item[({\ref{conj:index}}.5)] 
When the considered $X$ is a canonical threefold, 
Conjecture \ref{conj:index} is proved by Kawakita \cite{Kaw15}. 

\item[({\ref{conj:index}}.6)]
When the considered $X$ has toric singularities, 
Conjecture \ref{conj:index} is proved by Ambro \cite{Amb09} (cf.\ \cite{Amb16}*{Section 5.4}). 

\item[({\ref{conj:index}}.7)]
When the considered $X$ has only quotient singularity, Moraga \cite{Mor22} shows the existence of $\epsilon _n \in \mathbb{R}_{>0}$ (for each $n$) 
such that Conjecture \ref{conj:index} holds for $(n,a) = (n, \epsilon _n)$.
\end{enumerate}

The following theorem is our main theorem. 

\begin{thm}[$=$\ Theorem \ref{thm:quotp}]\label{thm:quot}
Conjecture \ref{conj:index} holds for quotient singularities. 
\end{thm}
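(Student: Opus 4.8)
The strategy is to pass to a local model, translate everything into the minimal log discrepancy and the determinant of a finite linear group, exploit that quotients by \emph{commuting} pairs are toric, and then reduce the remaining case to the abelian (toric) case via Jordan's theorem; the crucial point will be to keep the minimal log discrepancy \emph{exactly} under control through that last reduction.

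First I would pass to the completion, so that $\widehat{\mathcal O}_{X,p}\cong\mathbb C[[x_1,\dots,x_n]]^G$ for a finite \emph{small} subgroup $G\subset GL_n(\mathbb C)$ (no pseudo-reflections); neither $\operatorname{mld}_p(X)$ nor the Cartier index of $K_X$ at $p$ changes. For $e\neq g\in G$, writing the eigenvalues of $g$ as $\zeta^{b_1},\dots,\zeta^{b_n}$ with $\zeta$ a primitive $\operatorname{ord}(g)$-th root of unity and $0\le b_i<\operatorname{ord}(g)$, set $\operatorname{age}(g)=\frac1{\operatorname{ord}(g)}\sum_i b_i$. I would use three facts: (i) $\operatorname{mld}_p(X)=\min_{e\neq g\in G}\operatorname{age}(g)=:a$ (the standard description of the minimal log discrepancy of a quotient singularity via ages); (ii) the Cartier index of $K_X$ at $p$ equals $\operatorname{ord}(\det|_G)$, and since $\operatorname{ord}(\det g)$ is precisely the denominator of $\operatorname{age}(g)$ in lowest terms, this index is $N:=\operatorname{ord}(\det|_G)=\operatorname{lcm}_{g\in G}\bigl(\text{denominator of }\operatorname{age}(g)\bigr)$; and (iii) $\operatorname{age}(g)+\operatorname{age}(g^{-1})=n-\dim\ker(g-\mathrm{id})\le n$, so every $e\neq g\in G$ has $\operatorname{age}(g)\in[a,n-a]$ — in particular $a>0$ (quotient singularities are klt), so the statement is vacuous for $a=0$ and we must bound $N$ by a function of $n$ and $a>0$.

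The mechanism I would then use: if $g,g'\in G$ commute, then $\langle g,g'\rangle$ is a small \emph{abelian} group, so $\mathbb C^n/\langle g,g'\rangle$ is an affine toric variety; and if moreover $\operatorname{age}(g')=a$, then $\operatorname{mld}_0(\mathbb C^n/\langle g,g'\rangle)=\min_{\langle g,g'\rangle}\operatorname{age}$ lies between $\operatorname{mld}_p(X)=a$ and $\operatorname{age}(g')=a$, hence equals $a$; so by the toric case of the conjecture (Ambro, \cite{Amb09}) the Cartier index of $\mathbb C^n/\langle g,g'\rangle$ is at most some $r_{\mathrm{tor}}(n,a)$, whence $\operatorname{ord}(\det g)\le r_{\mathrm{tor}}(n,a)$. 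Fixing a minimal-age element $g_0$ (so $\operatorname{age}(g_0)=a$), this bounds $\operatorname{ord}(\det g)$ for every $g$ in the centralizer $C_G(g_0)$, hence $\operatorname{ord}(\det|_{C_G(g_0)})\le\operatorname{lcm}(1,\dots,r_{\mathrm{tor}}(n,a))$. It remains to deal with the elements of $G$ commuting with no minimal-age element. For this I would invoke Jordan's theorem: there is an abelian normal subgroup $A\trianglelefteq G$ with $[G:A]\le J(n)$; then $\mathbb C^n/A$ is toric, and a transversal argument gives $N\le[G:A]\cdot\operatorname{ord}(\det|_A)\le J(n)\cdot(\text{Cartier index of }\mathbb C^n/A)$, so it suffices to bound the index of the toric variety $\mathbb C^n/A$. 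If $A$ could be chosen to contain a minimal-age element — equivalently $\operatorname{mld}_0(\mathbb C^n/A)=a$ — this would again follow from \cite{Amb09}, giving $N\le J(n)\,r_{\mathrm{tor}}(n,a)=:r(n,a)$.

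This last point is, I expect, where essentially all the work lies, and it is the main obstacle. A priori one only knows $\operatorname{mld}_0(\mathbb C^n/A)\in[a,J(n)a]$ (the lower bound because $A\subseteq G$, the upper bound because $\mathbb C^n/A\to\mathbb C^n/G$ is quasi-\'etale of degree $\le J(n)$), and this range is genuinely insufficient: even for toric varieties the Cartier index is unbounded on $\{\operatorname{mld}\le a_0\}$ for every $a_0>0$ (already $\mathbb C^2/\tfrac1m(1,1)$ as $m\to\infty$), so Ambro's bound is not uniform over such an interval — one must \emph{exactly} realize the value $a$. To do this I would use the ``neighbourhood of the identity'' refinement of Jordan's theorem (for a suitable $\varepsilon(n)>0$, the elements $g$ with $\|g-\mathrm{id}\|<\varepsilon(n)$ generate an abelian normal subgroup of index $\le J(n)$) together with a pigeonhole argument on the cyclic group $\langle g_0\rangle$ inside the eigenvalue torus, which forces a short nontrivial power of $g_0$ into $A$ once $\operatorname{ord}(g_0)$ is large, and then argue that an age-$a$ power can be taken among these; the residual case, in which every minimal-age element has bounded order — so that $a$ has bounded denominator and the minimal-age data takes only finitely many shapes — has to be handled by a finer combinatorial analysis of a minimal-age element and its interaction with $G$, drawing on the already-known low-dimensional cases ({\ref{conj:index}}.1)–({\ref{conj:index}}.2). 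Matching the exact value of the minimal log discrepancy through the descent to the toric setting is the heart of the matter.
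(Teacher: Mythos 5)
Your overall plan matches the paper's (localize, translate into ages of group elements, use Ambro's toric bound, invoke Jordan's theorem), and you correctly identify the crucial obstacle: after descending to an abelian normal subgroup $H\trianglelefteq G$ of index $\le c_n$, one only knows that $\operatorname{mld}_{x_0}(\mathbb{A}^n/H)$ lies in some interval, not that it equals $a$, and Ambro's theorem requires the exact value. But you do not resolve that obstacle; the ``neighbourhood-of-identity Jordan plus pigeonhole plus residual combinatorics'' sketch is left entirely unexecuted and is not what the paper does. The paper closes the gap with a short algebraic trick you did not find: pick $g\in G$ with $\operatorname{age}'(g)=a$ and $h\in H$ with $\operatorname{age}'(h)=\operatorname{mld}_{x_0}(\mathbb{A}^n/H)$, and form $h':=\prod_{i=0}^{c_n!-1}g^i h g^{-i}\in H$. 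This $h'$ commutes with $g$, so $\langle g,h'\rangle$ is abelian and still has minimal log discrepancy exactly $a$ (since it contains $g$ and sits inside $G$); Ambro bounds $d(\langle g,h'\rangle)$, hence $\operatorname{ord}(\det h')$, and since $\det h'=(\det h)^{c_n!}$ one gets a bound on $\operatorname{ord}(\det h)$. Because $\operatorname{ord}(\det h)$ is precisely the denominator of $\operatorname{age}'(h)=\operatorname{mld}_{x_0}(\mathbb{A}^n/H)$, this forces that mld into a \emph{finite} set depending only on $n$ and $a$; applying Ambro to each of those finitely many values bounds $d(H)$, and then $d(G)\mid c_n!\,d(H)$ finishes. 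So the step you flagged as ``the heart of the matter'' is indeed the heart of the matter, and your proposal leaves it open.

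Two smaller issues. First, your fact~(i), $\operatorname{mld}_p(X)=\min_{e\ne g\in G}\operatorname{age}(g)$, is the formula for the \emph{total} minimal log discrepancy $\operatorname{mld}(X)$, not for $\operatorname{mld}_p(X)$ at the closed point; the correct statement uses the modified age $\operatorname{age}'(g)$ (where eigenvalue $1$ contributes $1$, not $0$), as in Proposition~\ref{prop:age} and Remark~\ref{rmk:age}. They differ whenever some $g\ne 1_G$ has $1$ as an eigenvalue (e.g.\ $\operatorname{diag}(-1,-1,1)$ on $\mathbb{A}^3$ gives $\operatorname{age}=1$ but $\operatorname{mld}_{x_0}=2$). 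Second, the claimed inclusion $\operatorname{mld}_0(\mathbb{C}^n/A)\in[a,J(n)a]$ is unjustified; there is no such multiplicative comparison for mlds across a quasi-\'etale cover in general, and in any case, as you yourself note, an interval bound would not suffice — one needs a discrete constraint, which is exactly what the paper's $\det h'=(\det h)^{c_n!}$ computation provides.
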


The main idea of the proof of Theorem \ref{thm:quot} is to use Jordan's theorem (Theorem \ref{thm:Jordan}) 
on finite linear groups and reduce it to the abelian quotient case,
where Conjecture \ref{conj:index} is proved by Ambro (see ({\ref{conj:index}}.6) above). 
This idea is due to Moraga \cite{Mor22}. 

In Section \ref{section:GC}, we prove Shokurov's Gorensteiness conjecture for quotient singularities. 
This conjecture states that we can take $r(n, n-1) = 1$ in Conjecture \ref{conj:index}.

\begin{ackn}
We would like to thank Professor Vyacheslav V.\ Shokurov for 
his kind comments and suggestions to our preprint. 
The first author is partially supported by JSPS KAKENHI No.\ 18K13384, 22K13888, 18H01108, and JPJSBP120219935. 
The second author is partially supported by JSPS KAKENHI No.\ 19K14496.
\end{ackn}

\section*{Notation}

\begin{itemize}
\item 
We basically follow the standard notions and terminologies in \cite{Har77} and \cite{Kol13}. 

\item 
Throughout this paper, $k$ is an algebraically closed field of characteristic zero. 
We say that $X$ is a \textit{variety} if 
$X$ is an integral scheme that is separated and of finite type over $k$. 
\end{itemize}

\section{Preriminaries}

First, we clarify the definition of quotient singularities in this paper. 
For a local ring $A$, let $\widehat{A}$ denote the completion of $A$ at the maximal ideal. 
\begin{defi}\label{defi:quot}
Let $X$ be a variety of dimension $n$ and let $p \in X$ be a closed point. 
We say that $X$ has a \textit{quotient singularity} at $p$ if 
there exists a finite subgroup $G \leqslant \operatorname{GL}_n(k)$ such that 
$\widehat{\mathcal{O}}_{X,p} \simeq \widehat{\mathcal{O}}_{M,x_0}$, where 
$M = \mathbb{A}^n _k/G$ is the quotient variety obtained by the natural linear action of $G$ on $\mathbb{A}^n_k$, 
and $x_0 \in M$ is the image of the origin of $\mathbb{A}^n _k$. 
\end{defi}

Next, we recall the definition of the Gorenstein indices and the minimal log discrepancies for $\mathbb{Q}$-Gorenstein normal singularities. 
\begin{defi}\label{defi:mld}
Let $X$ be a normal variety and let $p \in X$ be a closed point. 
Suppose that $X$ has a $\mathbb{Q}$-Gorenstein singularity at $p$ (i.e.\ $r K_X$ is Cartier at $p$ for some positive integer $r$). 
\begin{enumerate}
\item
The \textit{Gorenstein index} of $X$ at $p$ is the smallest positive integer $r$ such that $rK_X$ is Cartier at $p$. 

\item
Let $f: X' \to X$ be a proper birational morphism from a normal variety $X'$ and let $E$ be a prime divisor on $X'$. 
Then the \textit{log discrepancy} $a_E(X)$ is defined by
\[
a_E(X) := 1 + \operatorname{ord}_E(K_{X'/X}), 
\]
where $K_{X'/X} := K_{X'} - f^* K_X$ is the relative canonical divisor. 
We define
the \textit{minimal log discrepancy} $\operatorname{mld}_p(X)$ at $p$ by
\[
\operatorname{mld}_p(X) := \inf_{c_X(E) = \{ p \}} a_E(X)
\]
if $\dim X \ge 2$, where the infimum is taken over all prime divisors $E$ over $X$ with center $c_X(E) = \{ p \}$. 
When $\dim X = 1$, we define $\operatorname{mld}_p(X) := \inf_{c_X(E) = \{ p \}} a_E(X)$ if $\inf_{c_X(E) = \{ p \}} a_E(X) \ge 0$, and 
we define $\operatorname{mld}_p(X) := - \infty$ otherwise. 
It is known that $\operatorname{mld}_p(X) \in \mathbb{Q}_{\ge 0} \cup \{ - \infty \}$ in general (cf.\ \cite{KM98}*{Corollary 2.31}). 
\end{enumerate}
\end{defi}

For a finite group $G \leqslant \operatorname{GL}_{n}(k)$ and $g \in G$, we define $d(G)$ and $\operatorname{age}'(g)$ as follows. 
\begin{defi}\label{defi:d}
Let $n$ be a positive integer and 
let $G \leqslant \operatorname{GL}_{n}(k)$ be a finite subgroup. 
Let $d := \# G$ be the order of $G$, and let $\xi \in k$ be a primitive $d$-th root of unity. 
\begin{enumerate}
\item 
We define a positive integer $d(G)$ by
\[
d(G) := \min \bigl\{ \ell \in \mathbb{Z}_{>0} \ \big| \  \text{$(\det (g))^{\ell} = 1$ holds for any $g \in G$} \bigr\}. 
\]

\item 
Let $g \in G$. 
Since $g$ has finite order, $g$ is conjugate to a diagonal matrix $\operatorname{diag}(\xi ^{e_1}, \ldots , \xi ^{e_n})$ 
with $1 \le e_i \le d$. 
Then, we define $\operatorname{age}'(g) := \sum _{i = 1} ^n \frac{e_i}{d}$. 
\end{enumerate}
\end{defi}

\begin{rmk}\label{rmk:age}
The notation $\operatorname{age}'(g)$ defined in Definition \ref{defi:d}(2) is not usually used outside of this paper. 
A prime ${}'$ is added to distinguish it from the usual age (denoted by $\operatorname{age}(g)$) defined in some literatures (cf.\ \cite{Kol13}*{Definition 3.20}). 
Note that we have the relation $\operatorname{age}(g) = \operatorname{age}'(g) - \# \{1 \le i \le n \mid e_i = d \}$. 
\end{rmk}

\begin{rmk}\label{rmk:p_ref}
In Theorem \ref{thm:d} and Proposition \ref{prop:age} below, we assume that $G$ does not contain a pseudo-reflection. 
In this remark, we will explain that this assumption is not harmful in applications. 

Let $n$ be a positive integer and 
let $G \leqslant \operatorname{GL}_{n}(k)$ be a finite subgroup. 
Let $G_{\rm pr} \leqslant G$ be the subgroup generated by the pseudo-reflections in $G$. 
Then, we have $\mathbb{A}^n_k / G \simeq \mathbb{A}^n_k / (G/G_{\rm pr})$ by the Chevalley-Shephard-Todd Theorem (cf.\ \cite{Kol13}*{Subsection 3.18}). 
Therefore, when considering a quotient variety $\mathbb{A}^n_k / G$, we may assume that $G$ contains no pseudo-reflection. 
\end{rmk}

In \cite{Wes91}, Weston  proved that the Gorenstein index of $\mathbb{A}^n_k /G$ at 
the image $x_0$ of the origin of $\mathbb{A}^n_k$ coincides with $d(G)$ when $G$ 
has no pseudo-reflections. 
\begin{thm}[\cite{Wes91}*{Theorem 2.2}]\label{thm:d}
Let $n$ be a positive integer and 
let $G \leqslant \operatorname{GL}_{n}(k)$ be a finite subgroup. 
Suppose that $G$ does not contain a pseudo-reflection. 
Let $x_0 \in \mathbb{A}^n_k / G$ be the image of the origin of $\mathbb{A}^n _k$. 
Then, $d(G)$ coincides with the Gorenstein index of $\mathbb{A}^n_k / G$ at $x_0$. 
\end{thm}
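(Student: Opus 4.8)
The plan is to reduce everything to a divisor class group computation on the affine cone $X = \mathbb{A}^n_k/G = \operatorname{Spec} R$, where $R = k[x_1,\dots,x_n]^G$ is the positively graded ring of invariants with $R_0 = k$ and $x_0$ is its vertex. First I would observe that $rK_X$ is Cartier at $x_0$ if and only if the rank-one reflexive sheaf $\mathcal{O}_X(rK_X)$ is invertible at $x_0$; since $R$ is graded with $R_0 = k$, a rank-one reflexive $R$-module that is free at the irrelevant maximal ideal is free, so this happens if and only if the class $r[K_X]$ vanishes in $\operatorname{Cl}(R)$. Hence the Gorenstein index of $X$ at $x_0$ is exactly the order of $[K_X]$ in $\operatorname{Cl}(R)$, and the theorem reduces to showing that this order equals $d(G) = \min\{\ell \in \mathbb{Z}_{>0} : \det(g)^{\ell} = 1 \text{ for all } g \in G\}$.

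The second step, which is the heart of the argument, is to identify $\operatorname{Cl}(R)$ with the character group $\operatorname{Hom}(G, k^{\times})$. The assumption that $G$ contains no pseudo-reflection means precisely that the finite quotient morphism $\pi\colon \mathbb{A}^n_k \to X$ is \'etale in codimension one (the ramification divisors of $\pi$ are the images of the fixed hyperplanes of pseudo-reflections). Since $k[x_1,\dots,x_n]$ is a unique factorization domain, so that $\operatorname{Cl}(k[x_1,\dots,x_n]) = 0$, and its units are $k^{\times}$ with trivial $G$-action, the standard descent for divisor class groups along a finite group action unramified in codimension one yields $\operatorname{Cl}(R) \cong H^1(G, k^{\times}) = \operatorname{Hom}(G, k^{\times})$. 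Concretely, to a character $\chi$ one attaches the class of the rank-one reflexive $R$-module $R_{\chi} := \{ f \in k[x_1,\dots,x_n] : g \cdot f = \chi(g) f \text{ for all } g \in G \}$ of $\chi$-semi-invariants; one verifies that each $R_{\chi}$ is nonzero (every linear character of $G$ occurs in $k[x_1,\dots,x_n]$), that $R_{\chi} \otimes_R R_{\chi'}$ has reflexive hull $R_{\chi\chi'}$, so that $\chi \mapsto [R_{\chi}]$ is a group homomorphism, and that $R_{\chi}$ is free over $R$ exactly when $\chi$ is trivial. This is where the no-pseudo-reflection hypothesis is essential, and where the real work lies.

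Finally I would match $[K_X]$ with the character $\det\colon G \to k^{\times}$. Since $\pi$ is \'etale in codimension one, the canonical sheaf of $X$ admits the invariant-differential description $\mathcal{O}_X(rK_X) \cong (\pi_* \omega_{\mathbb{A}^n_k}^{\otimes r})^G$ for every $r$. Writing an $r$-fold pluricanonical form as $f \cdot (dx_1 \wedge \cdots \wedge dx_n)^{\otimes r}$ with $f \in k[x_1,\dots,x_n]$, and using that $g$ acts on $dx_1 \wedge \cdots \wedge dx_n$ by $\det(g)^{-1}$, invariance of the form is equivalent to $g \cdot f = \det(g)^{r} f$; hence $\mathcal{O}_X(rK_X) \cong R_{\det^{r}}$ as an $R$-module. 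In particular $\mathcal{O}_X(K_X) \cong R_{\det}$, so under the isomorphism of the previous paragraph $[K_X]$ corresponds to $\det$ (up to replacing $\det$ by $\det^{-1}$, which does not change its order). Combining the three steps: $rK_X$ is Cartier at $x_0$ if and only if $\det^{r}$ is the trivial character, i.e.\ $\det(g)^{r} = 1$ for all $g \in G$; the least such $r$ is $d(G)$, which is therefore the Gorenstein index of $\mathbb{A}^n_k/G$ at $x_0$.
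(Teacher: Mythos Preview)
The paper does not give a proof of this theorem at all: it is stated as a citation of Weston \cite{Wes91}*{Theorem 2.2}, with only a remark explaining how the general statement there specializes when $G$ has no pseudo-reflections. So there is no ``paper's own proof'' to compare against beyond the reference.

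Your outline is a correct and standard reconstruction of the argument (essentially the line taken by Watanabe and by Weston in the pseudo-reflection-free case). The three steps are sound: (i) for the positively graded ring $R = k[x_1,\dots,x_n]^G$ with $R_0 = k$, a finitely generated graded rank-one reflexive module is free if and only if it is free at the irrelevant maximal ideal, so the Gorenstein index at $x_0$ is the order of $[\omega_R]$ in $\operatorname{Cl}(R)$; (ii) Samuel--Fossum descent for class groups along a finite quotient unramified in codimension one, together with $\operatorname{Cl}(k[x_1,\dots,x_n]) = 0$ and units $= k^{\times}$, gives $\operatorname{Cl}(R) \cong \operatorname{Hom}(G,k^{\times})$ via $\chi \mapsto [R_{\chi}]$; (iii) the invariant-differential description of $\omega_R$ identifies $[\omega_R]$ with $\det^{\pm 1}$, whose order is $d(G)$. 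The only cosmetic point is the sign of the character in step~(iii), which you already note is irrelevant to the order; depending on whether one lets $G$ act on functions by $(g\cdot f)(v) = f(g^{-1}v)$ or $f(gv)$, one gets $\det$ or $\det^{-1}$.
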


\begin{rmk}
Weston's theorem \cite{Wes91}*{Theorem 2.2} treats a general setting, 
where the finite group $G$ may contain a pseudo-reflection. 
Note that in the notation in \cite{Wes91}*{Hypotheses 2.1}, 
we have $d_{ij} = 1$ and $g'_j = g_j$ for any $i$ and $j$ when $G$ does not contain a pseudo-reflection. 
\end{rmk}

The minimal log discrepancy of $\mathbb{A}^n_k /G$ at $x_0$ can be described by $\operatorname{age}'(g)$. 

\begin{prop}\label{prop:age}
Let $n$, $G$ and $x_0$ be as in Theorem \ref{thm:d}. 
Then it follows that 
\[
\operatorname{mld}_{x_0} \bigl( \mathbb{A}^n_k / G \bigr) = \min \{ \operatorname{age}'(g) \mid g \in G \}.
\]
\end{prop}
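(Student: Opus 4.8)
The plan is to compute the minimal log discrepancy via the theory of toroidal/abelian quotient singularities applied one group element at a time. More precisely, I would use the standard fact that divisorial valuations over $\mathbb{A}^n_k/G$ with center $x_0$ arise, after passing to the cyclic cover, from monomial valuations associated to elements of $G$; this reduces the computation of $\operatorname{mld}_{x_0}(\mathbb{A}^n_k/G)$ to a finite optimization over $G$. A convenient way to organize this is to invoke Ito's description (or the toric/McKay correspondence computation) of discrepancies of the ``tautological'' divisors $E_g$ produced by each $g \in G$: writing $g$ as $\operatorname{diag}(\xi^{e_1},\dots,\xi^{e_n})$ with $1 \le e_i \le d$ as in Definition \ref{defi:d}, the divisor $E_g$ obtained by the weighted blow-up with weights $(e_1/d,\dots,e_n/d)$ on $\mathbb{A}^n_k$, descended to the quotient, has log discrepancy exactly $\operatorname{age}'(g) = \sum_i e_i/d$.

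The proof then splits into the two inequalities. For ``$\le$'': since $G$ has no pseudo-reflections, each nontrivial $g$ has $g \ne \operatorname{id}$ acting without fixing a hyperplane, so the $E_g$ above really are exceptional divisors with center $\{x_0\}$ on some normal model $X' \to \mathbb{A}^n_k/G$; computing $a_{E_g}$ with respect to $K_{\mathbb{A}^n_k/G}$ (pulled back via the étale-in-codimension-one quotient map, where canonical divisors match up) gives $a_{E_g}(\mathbb{A}^n_k/G) = \operatorname{age}'(g)$, hence $\operatorname{mld}_{x_0} \le \min_g \operatorname{age}'(g)$. Note $g = \operatorname{id}$ gives $\operatorname{age}'(\operatorname{id}) = n$, which is harmless since $\operatorname{mld}_{x_0} \le n$ always. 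For ``$\ge$'': I would take any prime divisor $E$ over $\mathbb{A}^n_k/G$ with $c_X(E) = \{x_0\}$, pull it back to a divisor $\widetilde E$ over $\mathbb{A}^n_k$ via the quotient map $\pi \colon \mathbb{A}^n_k \to \mathbb{A}^n_k/G$, and use the ramification formula $K_{\mathbb{A}^n_k} = \pi^* K_{\mathbb{A}^n_k/G}$ (valid in codimension one since $G$ is pseudo-reflection-free, so $\pi$ is étale in codimension one) together with the behaviour of log discrepancies under the finite quotient morphism (cf.\ \cite{Kol13}*{Proposition 3.21 or 2.41}): if $\widetilde E$ appears with ramification index $e$ in $\pi^* E$, then $a_{\widetilde E}(\mathbb{A}^n_k) = e \cdot a_E(\mathbb{A}^n_k/G)$. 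Since $\mathbb{A}^n_k$ is smooth, $a_{\widetilde E}(\mathbb{A}^n_k) = 1 + \operatorname{ord}_{\widetilde E}(K_{\widetilde X/\mathbb{A}^n_k}) \ge 1$, but this crude bound is not enough; instead, I would identify the monomial valuation of $\widetilde E$ on a $G$-equivariant resolution and match it against the element of $G$ acting on its ``residue'' to recover a term of the form $\operatorname{age}'(g)$ as a lower bound.

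Concretely, for the ``$\ge$'' direction I would reduce to the case where $E$ is a monomial (toroidal) valuation: by the structure of the McKay correspondence / the Denef--Loeser--Batyrev computation of stringy invariants, or more elementarily by Reid--Tai's argument, any divisorial valuation over the quotient that computes $\operatorname{mld}$ can be taken to come from a $G$-fixed point on the resolution of $\mathbb{A}^n_k$, and the local monodromy action there is given by some $g \in G$; the log discrepancy of $E$ is then $\ge \operatorname{age}'(g) - (\text{something}) \ge \min_g \operatorname{age}'(g)$ after accounting for the shift in Remark \ref{rmk:age} between $\operatorname{age}$ and $\operatorname{age}'$. The cleanest route is probably: pass to a $G$-equivariant log resolution $Y \to \mathbb{A}^n_k$, so that $Y/G \to \mathbb{A}^n_k/G$ factors every relevant valuation, and on $Y/G$ all divisors are toroidal; then invoke the toric case of the statement (equivalently, Ambro's toric computation ({\ref{conj:index}}.6), or a direct lattice-point count) to see that each toroidal divisor $F$ over $\mathbb{A}^n_k/G$ centered at $x_0$ satisfies $a_F(\mathbb{A}^n_k/G) \ge \min_g \operatorname{age}'(g)$, with the primitive lattice generators of the relevant cone in $\frac{1}{d}\mathbb{Z}^n$ being exactly the vectors $\frac{1}{d}(e_1(g),\dots,e_n(g))$.

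I expect the main obstacle to be the ``$\ge$'' inequality, specifically justifying that it suffices to test the minimal log discrepancy on toroidal (monomial) divisors with respect to the natural toroidal structure coming from the cyclic-type presentation of the quotient — equivalently, the compatibility between divisorial valuations upstairs on $\mathbb{A}^n_k$ and downstairs on $\mathbb{A}^n_k/G$ via the ramification formula, including bookkeeping of which conjugacy class of $g$ governs a given valuation and the exact normalization ($1 \le e_i \le d$ versus $0 \le e_i < d$) that distinguishes $\operatorname{age}'$ from $\operatorname{age}$. Once that reduction is in place, the remaining optimization is the elementary observation that a nonzero nonnegative integer combination of the generating vectors $\frac{1}{d}(e_1(g),\dots,e_n(g))$ has $\sum$ of coordinates $\ge \min_g \operatorname{age}'(g)$, because the coefficient of $g = \operatorname{id}$ is the only way to get small sums and that vector has coordinate-sum $n \ge \operatorname{age}'(g')$ for any $g'$, while every other generator already contributes its full $\operatorname{age}'$.
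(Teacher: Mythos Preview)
Your ``$\le$'' direction is essentially fine: for each $g\in G$ the cyclic quotient $\mathbb{A}^n_k/\langle g\rangle$ is toric and carries a toric divisor of log discrepancy $\operatorname{age}'(g)$; since $G$ has no pseudo-reflections, the finite map $\mathbb{A}^n_k/\langle g\rangle \to \mathbb{A}^n_k/G$ is \'etale in codimension one, and the usual comparison of log discrepancies under such a cover produces a divisor over $\mathbb{A}^n_k/G$ with center $x_0$ and log discrepancy at most $\operatorname{age}'(g)$.

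The ``$\ge$'' direction, however, has a genuine gap. Your key step is to pass to a $G$-equivariant log resolution $Y\to\mathbb{A}^n_k$ and assert that ``on $Y/G$ all divisors are toroidal'', reducing everything to a lattice optimisation. But this is not true in general: the stabiliser $G_y\leqslant G$ of a point $y\in Y$ acts linearly on $T_yY$ and may be non-abelian, so locally $Y/G$ is again a non-abelian quotient singularity and you have not simplified the problem. Likewise, the ramification-formula route you sketch yields $a_E(\mathbb{A}^n_k/G)=a_{\widetilde E}(\mathbb{A}^n_k)/r$ with $r$ the order of the (cyclic) inertia group of $\widetilde E$, but nothing in your outline bounds $a_{\widetilde E}(\mathbb{A}^n_k)/r$ below by some $\operatorname{age}'(g)$; the Reid--Tai argument gives the canonicity threshold, not the full $\operatorname{mld}$ formula, and the Denef--Loeser--Batyrev and McKay references you invoke encode the right information only after substantial work that your sketch does not carry out.

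The paper avoids all of this by a single citation: \cite{NS22}*{Corollary 4.12} gives directly
\[
\operatorname{mld}_{x_0}\bigl(\mathbb{A}^n_k/G\bigr)=\min_{\gamma\in G}\operatorname{mld}_{x_\gamma}\bigl(\mathbb{A}^n_k/\langle\gamma\rangle\bigr),
\]
proved there via arc-space techniques. This reduces the statement in one stroke to the cyclic (hence toric) case, which is handled by the explicit computation in Remark~\ref{rmk:age1}. That identity is precisely the missing ingredient in your argument, and it is not elementary.
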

\begin{proof}
First, note that this formula is well-known when $G$ is an abelian group (see Remark \ref{rmk:age1}). 

For $\gamma \in G$, let $\langle \gamma \rangle \leqslant G$ denote the subgroup generated by $\gamma$, and 
let $x_{\gamma} \in \mathbb{A}^n_k / \langle \gamma \rangle$ denote the image of the origin of $\mathbb{A}^n_k$. 
Then, by \cite{NS22}*{Corollary 4.12} (see also \cite{NS22}*{Remark 4.13}), we have 
\[
\operatorname{mld}_{x_0} \bigl( \mathbb{A}^n_k / G \bigr) = 
\min_{\gamma \in G} \operatorname{mld}_{x_{\gamma}} \bigl( \mathbb{A}^n_k / \langle \gamma \rangle \bigr). 
\]
Therefore, the assertion follows from the cyclic quotient case. 
\end{proof}

\begin{rmk}\label{rmk:age1}
When $G$ is an abelian group, Proposition \ref{prop:age} seems to be well-known to experts. 
However, since we could not find a good reference, we give a proof below for the reader's convenience. 
We refer to \cite{CLS} for the standard notions on toric varieties. 

Suppose that $G$ in Proposition \ref{prop:age} is an abelian group. 
Since $G$ is a finite abelian group, its elements are simultaneously diagonalizable. 
Hence, we may assume that $G$ consists of diagonal matrices from the beginning. 
Let $d \in \mathbb{Z}_{>0}$ and $\xi \in k$ be as in Definition \ref{defi:d}. 
Each $g \in G$ can be written as $g = \operatorname{diag}\bigl( \xi^{e_{1}^{(g)}}, \ldots, \xi^{e_{n}^{(g)}} \bigr)$. 
We define $u^{(g)} := \bigl( e_{1}^{(g)}/d, \ldots, e_{n}^{(g)}/d \bigr) \in \mathbb{R}^n$, and 
define a lattice $N \subset \mathbb{R}^n$ by 
$N := \mathbb{Z}^n + \sum _{g \in G} \mathbb{Z} u^{(g)}$. 
Let $u_1, \ldots , u_n \in \mathbb{R}^n$ be the standard basis 
(i.e.\ $u_i$ is the vector with $1$ in the $i$-th coordinate and $0$'s elsewhere). 
Let $\sigma \subset \mathbb{R}^n = N_{\mathbb{R}}$ be the cone spanned by $u_1, \ldots , u_n$. 
Let $U_{\sigma, N}$ denote the toric variety corresponding to $\sigma \subset N_{\mathbb{R}}$. 
Then we have $U_{\sigma, N} \simeq \mathbb{A}^n_k / G$ (cf.\ \cite{CLS}*{Example 1.3.20}). 

Since $G$ does not contain a pseudo-reflection, $u_1, \ldots , u_n \in N$ are primitive vectors 
(i.e.\ there are no integer $\ell \ge 2$ and $u \in N$ satisfying $u_i = \ell u$). 
Therefore, the support function $\varphi: \sigma \to \mathbb{R}$ of the $\mathbb{Q}$-Cartier divisor $K_{U_{\sigma, N}}$ 
is given by the function satisfying $\varphi((a_1, \ldots, a_n)) = \sum _i a_i$ for any $(a_1, \ldots , a_n) \in \sigma \subset \mathbb{R}^n$. 
Furthermore, 
we have the one-to-one correspondence between the primitive vectors $u$ in $\operatorname{int}(\sigma) \cap N$ and 
the torus invariant divisors $D_u$ over $U_{\sigma, N}$ whose centers are the torus invariant closed point. 
Then, we have $a_{D_u}(U_{\sigma, N}) = \varphi(u)$ by \cite{CLS}*{Lemma 11.4.10}. 
Therefore, we get 
\[
\operatorname{mld}_{x_0} \bigl( \mathbb{A}^n_k / G \bigr) 
= \min _{u \in \operatorname{int}(\sigma) \cap N} \varphi(u)
= \min _{u \in (0,1]^n \cap N} \varphi(u),
\]
and the right hand side is equal to $\min \{ \operatorname{age}'(g) \mid g \in G \}$ since 
we have $\varphi(u^{(g)}) = \operatorname{age}'(g)$ for $g \in G$. 
\end{rmk}

\begin{rmk}\label{rmk:age2}
In the same setting as in Proposition \ref{prop:age}, it is known that 
\[
\operatorname{mld} \bigl( \mathbb{A}^n_k / G \bigr) = 
\min \{ \operatorname{age}(g) \mid g \in G \setminus \{ 1_G \} \}, 
\]
where $\operatorname{mld}(X) := \inf _{E} a_E(X)$ is the total log discrepancy 
(see Remark \ref{rmk:age} for the difference between $\operatorname{age}'(g)$ and $\operatorname{age}(g)$). 
This formula is proved by Yasuda \cite{Yas06} using the motivic integration method. 
Proposition \ref{prop:age} can also be directly proved in a similar way without reducing the abelian quotient case. 
Indeed, it follows from \cite{NS22}*{Theorem 4.8} or \cite{NS2}*{Theorem 7.9}. 
\end{rmk}

\section{Shokurov's index conjecture for quotient singularities}

In this section, we prove Shokurov's index conjecture for quotient singularities. 
First, note that Ambro in \cite{Amb09} proved 
this conjecture for toric varieties, in particular for abelian quotient singularities. 
\begin{thm}[Ambro \cite{Amb09}]\label{thm:abel}
Conjecture \ref{conj:index} is true for toric singularities, 
in particular for abelian quotient singularities.
\end{thm}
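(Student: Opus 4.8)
The plan is to translate the statement into the combinatorics of cones and lattices, reduce to the case of an abelian quotient singularity (a simplicial cone), and there bound the Gorenstein index directly in terms of the minimal log discrepancy. First I would fix the toric dictionary. Write the germ as $(U_\sigma, x_0)$ for a full-dimensional rational cone $\sigma \subseteq N_{\mathbb R} \cong \mathbb R^n$ with primitive ray generators $u_1, \dots, u_m \in N$, and let $M = N^\vee$. The $\mathbb Q$-Gorenstein hypothesis is exactly that $u_1, \dots, u_m$ lie on a common affine hyperplane $\{\, v \mid \langle m_\sigma, v\rangle = 1 \,\}$ for a unique $m_\sigma \in M_{\mathbb Q}$. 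As in the computation recalled in Remark \ref{rmk:age1}, the Cartier index of $K_{U_\sigma}$ at $x_0$ is the least $r \in \mathbb Z_{>0}$ with $r\,m_\sigma \in M$; and, since toric divisorial valuations compute the minimal log discrepancy at $x_0$,
\[
a := \operatorname{mld}_{x_0}(U_\sigma) = \min\bigl\{\, \langle m_\sigma, v\rangle \ \big|\ v \in \operatorname{int}(\sigma) \cap N,\ v \neq 0 \,\bigr\}.
\]
Thus the problem becomes: bound $r$ in terms of $n$ and $a$.

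I would then reduce to a simplicial cone. Choosing a triangulation of $\sigma$ that adds no new ray, the induced toric morphism onto $U_\sigma$ is crepant, and each maximal cone $\sigma_j$ of the triangulation is spanned by $n$ of the $u_i$, lies on the hyperplane $\langle m_\sigma,\cdot\rangle=1$, and, taken with the lattice $N$, gives an abelian quotient $U_{\sigma_j}\cong\mathbb A^n_k/G$ of the same index $r$ (same $m_\sigma$, same $M$). The one subtlety is to keep the minimal log discrepancy equal to $a$, not merely $\ge a$: if some minimizer $v^*$ of the displayed expression lies in the interior of a $\sigma_j$ this is automatic, using $\operatorname{int}(\sigma_j)\subseteq\operatorname{int}(\sigma)$ to see $\operatorname{mld}_{x_j}(U_{\sigma_j})=a$ again; while if $v^*$ is unavoidably on a lower-dimensional wall one passes to the star of the minimal cone containing it, obtaining an abelian quotient germ of strictly smaller dimension with minimal log discrepancy still $a$ through which the index $r$ is recovered, and inducts on $n$. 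After this reduction, by Remark \ref{rmk:p_ref} we may assume that $G$ has no pseudo-reflection; then Theorem \ref{thm:d} identifies $r$ with the order of the determinant character $\det\colon G \to k^\times$, and Proposition \ref{prop:age} gives $a = \min\{\, \operatorname{age}'(g) \mid g \in G \,\}$.

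The remaining, and main, step is purely combinatorial: bound the order of the determinant character of a finite abelian $G \leqslant \operatorname{GL}_n(k)$ without pseudo-reflections in terms of $n$ and $\min_g \operatorname{age}'(g)$. Diagonalizing, $G \leqslant (\mathbb Q/\mathbb Z)^n$; the absence of pseudo-reflections says that every $g \neq 1_G$ moves at least two coordinates, and $r$ is the order of the image of the homomorphism $g \mapsto \sum_i g_i$ into $\mathbb Q/\mathbb Z$. I would then examine the ages of powers of elements of $G$ --- in particular of an element $g_0$ with $\operatorname{age}'(g_0)=a$ --- exploiting the identity
\[
\operatorname{age}'(g) + \operatorname{age}'(g^{-1}) = n + \#\{\, i \mid g_i = 0 \,\} \in \mathbb Z
\]
together with the inequalities $\operatorname{age}'(g^k) \geq a$, to see first that $a \in \tfrac1r\mathbb Z$ and then to bound $r$ by how far $a$ is from the integers, handling the case $a \in \mathbb Z$ (which contains the Gorensteinness range $a \geq n-1$) separately. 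I expect this combinatorial step to be the hard part. The reason is structural: a mere lower bound $\operatorname{mld}_{x_0} \geq a_0$ does \emph{not} bound the index, as the threefold cyclic quotients $\tfrac1N(1,-1,b)$ with $\gcd(b,N) = 1$ --- of index $N$ and minimal log discrepancy $1 + \tfrac1N$ --- already show; one must genuinely use that the value $a$ is attained, which is also why keeping the minimal log discrepancy exactly $a$ through the reduction step cannot be skipped, and the bookkeeping tying the attained minimum to the structure of $G$ near integer values of $a$ is where the real work sits.
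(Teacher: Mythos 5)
The paper does not prove this statement; it simply cites Ambro's \cite{Amb09}. Your proposal attempts to reprove it, but leaves the decisive step open. After reducing (in outline) to an abelian $G \leqslant \operatorname{GL}_n(k)$ without pseudo-reflections, you correctly identify the target --- via Theorem~\ref{thm:d} and Proposition~\ref{prop:age} --- as bounding the order of the determinant character of $G$ in terms of $n$ and $a = \min_g \operatorname{age}'(g)$, and then explicitly flag this as ``the hard part'' without carrying it out. That combinatorial bound is essentially the entire content of Ambro's theorem in the simplicial case; reaching the correct reformulation and stopping there is a reduction scheme, not a proof. You do correctly diagnose the obstruction --- the family $\tfrac1N(1,-1,b)$ shows a lower bound on the mld cannot bound the index, so the exact value of $a$ must enter --- but diagnosing the difficulty is not overcoming it. The auxiliary identity $\operatorname{age}'(g)+\operatorname{age}'(g^{-1}) = n + \#\{i : g_i = 0\}$ is correct, but no argument is given that it, together with $\operatorname{age}'(g^k)\ge a$, forces a bound on $r$.

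There is also a gap in the reduction from general toric cones to simplicial ones. You need a minimizer $v^*$ of $\langle m_\sigma,\cdot\rangle$ over $\operatorname{int}(\sigma)\cap N$ to lie in the interior of some maximal cone $\sigma_j$ of the triangulation, and your fallback --- passing to ``the star of the minimal cone $\tau$ containing $v^*$'' and inducting on dimension --- is not coherent as stated. Such a $\tau$ is a cone of the triangulation, not a face of $\sigma$, so the star/orbit-closure construction on the fan of $\sigma$ does not apply to it; and if one instead restricts to the germ $U_\tau$ with lattice $N\cap\operatorname{span}(\tau)$, its Gorenstein index is controlled by $m_\sigma|_{\operatorname{span}\tau}$ modulo $(N\cap\operatorname{span}\tau)^\vee$, which bears no obvious relation to the original index $r$ determined by $m_\sigma$ modulo $M$, and its mld need not be $a$ either once the ambient dimension changes. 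Compare the paper's own use of a simplicial subcone in Section~\ref{section:GC}: there $\tau$ is chosen with $\dim\tau = \dim\sigma$, spanned by rays of $\sigma$, so that $m_\sigma$ is shared and $\tau^\perp = \sigma^\perp$, which is precisely what keeps the index comparison honest. Your dimension-dropping move does not have this feature, so even the reduction to the abelian case needs repair.
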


We use Jordan's theorem on finite linear groups. 
\begin{thm}[Jordan's theorem on finite linear groups \cite{Jor78}]\label{thm:Jordan}
For a positive integer $n$, 
there exists a positive integer $c_n$ with the following condition: 
\begin{itemize}
\item 
Suppose that $K$ is a field of characteristic zero and that 
$G \leqslant \operatorname{GL}_n(K)$ is a finite subgroup. 
Then, there exists an abelian normal subgroup $H \leqslant G$ 
such that its index satisfies $[G:H] \le c_n$. 
\end{itemize}
\end{thm}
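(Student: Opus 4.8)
The plan is to prove Jordan's theorem (Theorem \ref{thm:Jordan}) following the classical argument. Fix the positive integer $n$; we want a bound $c_n$ depending only on $n$ so that every finite subgroup $G \leqslant \operatorname{GL}_n(K)$ (with $K$ of characteristic zero) has an abelian normal subgroup of index at most $c_n$. The first reduction is to pass from $K$ to $\mathbb{C}$: since $G$ is finite, all its matrix entries lie in a finitely generated, hence (in characteristic zero) embeddable-into-$\mathbb{C}$, subfield of $K$, so we may assume $K = \mathbb{C}$. The second reduction is that, by averaging a Hermitian form over $G$, we may conjugate $G$ into the unitary group $U(n)$; thus it suffices to treat finite subgroups of $U(n)$.

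The heart of the proof is the following metric observation. Equip $U(n)$ with the operator norm, and let $U \subset U(n)$ be the open neighborhood of the identity consisting of matrices $g$ with $\lVert g - I \rVert < \tfrac{1}{2}$ (any sufficiently small explicit radius works). The key claim is: if $g, h \in G$ both lie in $U$, then the commutator $[g,h] = ghg^{-1}h^{-1}$ satisfies $\lVert [g,h] - I \rVert < \lVert g - I \rVert$ unless $[g,h] = I$; more precisely one shows $\lVert [g,h]-I\rVert \le 2\lVert g-I\rVert\,\lVert h-I\rVert/(1 - 2\lVert h-I\rVert)$ or a similar contraction estimate via the identity $[g,h] - I = (gh - hg)g^{-1}h^{-1} = \bigl((g-I)(h-I) - (h-I)(g-I)\bigr)g^{-1}h^{-1}$. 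Since $G$ is finite, the elements of $G \cap U$ cannot have arbitrarily small distance from $I$ among non-identity elements, so iterating the contraction forces every commutator of two elements of $G \cap U$ to be trivial, i.e.\ the subgroup $A := \langle G \cap U \rangle$ generated by $G \cap U$ is abelian. Moreover $A$ is normal in $G$: conjugation by any $\gamma \in G$ is an isometry of $U(n)$ fixing $I$, hence maps $G \cap U$ to itself, so it maps $A$ to $A$.

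It remains to bound $[G:A]$ by a constant depending only on $n$. This is a packing argument: the cosets $gA$ for $g \in G$ are separated in $U(n)$ because if $g_1, g_2$ lie in different cosets then $g_1 g_2^{-1} \notin A \supseteq G \cap U$, so $\lVert g_1 g_2^{-1} - I \rVert \ge \tfrac{1}{2}$, and by left-invariance of the metric $\lVert g_1 - g_2 \rVert$ is bounded below by a constant. Hence the coset representatives form a $\delta$-separated subset of the compact group $U(n)$ for an explicit $\delta = \delta(n)$, and a volume (or covering-number) comparison on the compact manifold $U(n)$ bounds their number by a constant $c_n$ depending only on $\dim U(n) = n^2$. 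Setting $H := A$ gives the desired abelian normal subgroup with $[G:H] \le c_n$.

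The main obstacle is making the commutator contraction estimate genuinely quantitative and choosing the radius of $U$ so that (a) the contraction factor is strictly less than $1$ on $U$, (b) one actually gets $[g,h]\in U$ again so the iteration stays inside the region where the estimate applies, and (c) the resulting separation constant $\delta(n)$ for distinct cosets is explicit enough to feed into the volume bound. All three are classical and elementary once the radius is pinned down, so the argument is routine linear algebra plus a compactness/packing count; no deep input (and in particular nothing about minimal log discrepancies) is needed here.
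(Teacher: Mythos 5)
The paper itself supplies no proof of this theorem: it is quoted as Jordan's classical result with a citation to \cite{Jor78}, and the only argument given is the remark reducing an arbitrary characteristic-zero field $K$ to $\mathbb{C}$. Your first paragraph reproduces that reduction correctly, and your overall architecture --- unitarize $G$ by averaging a Hermitian form, show that the elements of $G$ near the identity generate an abelian normal subgroup $A$, and bound $[G:A]$ by a separation/packing count in the compact group $U(n)$ --- is exactly the classical Bieberbach--Frobenius proof. The normality of $A=\langle G\cap U\rangle$ and the packing bound are fine as stated (note that in the unitary group the estimate simplifies: since $g^{-1}h^{-1}$ has operator norm $1$, the identity you quote gives $\lVert [g,h]-I\rVert\le 2\lVert g-I\rVert\,\lVert h-I\rVert$ with no denominator).

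The genuine gap is the central claim that ``iterating the contraction forces every commutator of two elements of $G\cap U$ to be trivial.'' The natural iteration is $h_0=h$, $h_{k+1}=[g,h_k]$; the contraction estimate and the finiteness of $G$ do force $h_k=I$ for large $k$. But $h_{k+1}=I$ only says that $g$ commutes with the iterate $h_k$; it gives no way to descend back to $[g,h]=I$, so abelianness of $A$ does not follow from the iteration as you describe it. The standard repair is a minimal-counterexample argument: among elements of $G\cap U$ not commuting with $g$, choose $h$ with $\lVert h-I\rVert$ minimal, and set $h'':=h^{-1}ghg^{-1}\in G$. Then $\lVert h''-I\rVert=\lVert gh-hg\rVert\le 2\lVert g-I\rVert\,\lVert h-I\rVert<\lVert h-I\rVert$ and $h''\ne I$, so by minimality $h''$ commutes with $g$. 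From $ghg^{-1}=hh''$ one gets $g^m h g^{-m}=h(h'')^m$ for all $m$, and since unitary conjugation preserves $\lVert\cdot{}-I\rVert$, every power of the nontrivial finite-order unitary matrix $h''$ stays within distance $1$ of $I$; this is impossible, because some power of a nontrivial root of unity has real part at most $-1/2$, forcing $\lVert (h'')^m-I\rVert\ge\sqrt{3}$ for some $m$. Some such extra step (or an equivalent commutator identity) is indispensable; as written, the key lemma is asserted rather than proved. With that lemma supplied, the rest of your argument goes through and yields the theorem.
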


\begin{rmk}
Although Jordan's theorem is usually asserted for $K = \mathbb{C}$, 
it holds for any field $K$ of characteristic zero with the same bound $c_n$. 
Since $G$ is a finite subgroup, $G \leqslant \operatorname{GL}_n(K')$ holds for some subfield $K' \subset K$ with $\# K' = \aleph _0$. 
Since such $K'$ can be embedded into $\mathbb{C}$, the assertion follows from the case where $K = \mathbb{C}$. 
\end{rmk}

\begin{thm}\label{thm:quotp}
Conjecture \ref{conj:index} is true for quotient singularities. 
\end{thm}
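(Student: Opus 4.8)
The plan is to reduce the general quotient case to the abelian quotient case via Jordan's theorem, keeping careful track of how the Gorenstein index and the minimal log discrepancy interact under passage to a bounded-index normal subgroup. Let $X$ have a quotient singularity at $p$, so $\widehat{\mathcal O}_{X,p} \simeq \widehat{\mathcal O}_{M,x_0}$ with $M = \mathbb A^n_k/G$ for some finite $G \leqslant \operatorname{GL}_n(k)$. By Remark \ref{rmk:p_ref} we may assume $G$ contains no pseudo-reflection. By Theorem \ref{thm:d} the Gorenstein index of $X$ at $p$ equals $d(G)$, so it suffices to bound $d(G)$ in terms of $n$ and $a := \operatorname{mld}_p(X) = \operatorname{mld}_{x_0}(M)$. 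By Proposition \ref{prop:age}, $a = \min\{\operatorname{age}'(g) \mid g \in G\}$.

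First I would apply Jordan's theorem (Theorem \ref{thm:Jordan}) to obtain an abelian normal subgroup $H \leqslant G$ with $[G:H] \le c_n$. The key quantitative point is that $d(G)$ is controlled by $d(H)$ and the index $[G:H]$: indeed for any $g \in G$ we have $g^{[G:H]} \in H$ (if $H$ is normal, $g^{[G:H]}$ maps to the identity in $G/H$), hence $(\det g)^{[G:H]\cdot d(H)} = 1$, which gives $d(G) \mid [G:H]\cdot d(H) \le c_n \cdot d(H)$. So it is enough to bound $d(H)$. Next I would relate $\operatorname{mld}$ of the $H$-quotient to that of the $G$-quotient: the quotient map $\mathbb A^n_k/H \to \mathbb A^n_k/G$ is finite of degree $[G:H]$, and minimal log discrepancies behave well under such finite quotient maps — concretely, $\operatorname{age}'$ of an element is unchanged when we view it inside $H$ rather than $G$, so $\operatorname{age}'(h) \ge a$ for all $h \in H$ as well (the minimum over $H \subseteq G$ can only be larger), giving $\operatorname{mld}_{x_H}(\mathbb A^n_k/H) \ge a$. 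Here I should be a little careful: $H$ may acquire pseudo-reflections even though $G$ had none, so to invoke Proposition \ref{prop:age} for $H$ I would pass to $H/H_{\mathrm{pr}}$, noting via Remark \ref{rmk:p_ref} that this does not change the quotient variety and using that $d(H/H_{\mathrm{pr}})$ still divides $d(H)$ up to the pseudo-reflection contributions — or, more cleanly, work directly with $\operatorname{age}'$ and the cyclic-subgroup formula from Proposition \ref{prop:age}, which only needs a generator-by-generator statement.

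Finally, with $H' := H/H_{\mathrm{pr}}$ an abelian finite subgroup (of some $\operatorname{GL}_n$) defining the same quotient $\mathbb A^n_k/H'\simeq \mathbb A^n_k/H$ and with $\operatorname{mld}_{x_{H'}}(\mathbb A^n_k/H') \ge a$, I would invoke Ambro's theorem (Theorem \ref{thm:abel}): the Gorenstein index of an abelian quotient singularity is bounded by a function of $n$ and its mld. This bounds $d(H')$, hence $d(H)$, hence $d(G) \le c_n \cdot d(H)$, in terms of $n$ and $a$ only. I expect the main obstacle to be the bookkeeping in the previous paragraph: ensuring that the inequality $\operatorname{mld}_{x_H}(\mathbb A^n_k/H) \ge \operatorname{mld}_{x_0}(\mathbb A^n_k/G)$ is genuinely valid (so that Ambro's bound applies with the \emph{same} $a$, or at least an $a$ bounded below away from a fixed quantity), and handling the possible appearance of pseudo-reflections in $H$ cleanly so that Theorem \ref{thm:d} and Proposition \ref{prop:age} remain applicable. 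Everything else is a short deduction once these two points are nailed down.
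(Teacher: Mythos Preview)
Your overall architecture matches the paper's: reduce to $\mathbb{A}^n_k/G$ with no pseudo-reflections, invoke Jordan's theorem to get an abelian normal $H\leqslant G$ of index $\le c_n$, bound $d(G)$ in terms of $d(H)$, and appeal to Ambro's theorem for $H$. However, there is a genuine gap at the very step you flag as ``bookkeeping''.

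The gap is that Ambro's theorem (Theorem~\ref{thm:abel}) is a statement about a \emph{fixed} value of the minimal log discrepancy: it bounds the Gorenstein index once $n$ and $\operatorname{mld}$ are both specified. Knowing only $\operatorname{mld}_{x_H}(\mathbb{A}^n_k/H)\ge a$ does \emph{not} bound $d(H)$, because the possible values of $\operatorname{mld}_{x_H}(\mathbb{A}^n_k/H)$ form an infinite set in $[a,n]$ and $r(n,\cdot)$ is not uniform on such a set. Concretely, for $n=3$ the cyclic quotients $\tfrac{1}{r}(1,1,r-1)$ have $\operatorname{mld}=1+\tfrac{1}{r}>1$ for all $r\ge 2$, yet Gorenstein index $r$; so ``$\operatorname{mld}\ge 1$'' gives no bound even in the cyclic case. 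Thus your step ``Ambro's theorem bounds $d(H')$ in terms of $n$ and $a$'' fails as written. (Incidentally, your side worry about pseudo-reflections in $H$ is not an issue: any pseudo-reflection in $H\subseteq G$ would already be one in $G$.)

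The paper closes this gap with an extra idea. Choose $g\in G$ with $\operatorname{age}'(g)=a$ and $h\in H$ realizing $\operatorname{mld}_{x_H}(\mathbb{A}^n_k/H)=\operatorname{age}'(h)$. Set $c'=c_n!$ and form
\[
h' \;=\; h\,(ghg^{-1})\,(g^{2}hg^{-2})\cdots(g^{c'-1}hg^{-(c'-1)})\in H,
\]
which commutes with $g$ because $g^{c'}\in H$ and $H$ is abelian. Then $\langle g,h'\rangle$ is an \emph{abelian} subgroup with $\operatorname{mld}_{x_0}(\mathbb{A}^n_k/\langle g,h'\rangle)=\operatorname{age}'(g)=a$ exactly, so Ambro's theorem bounds $d(\langle g,h'\rangle)$ by some $r'_1(n,a)$. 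Since $\det(h')=(\det h)^{c'}$, this forces the order of $\det h$ to divide $c_n!\cdot r'_1(n,a)$, and hence $\operatorname{age}'(h)\in \tfrac{1}{c_n!\cdot r'_1(n,a)}\mathbb{Z}\cap(0,n]$, a \emph{finite} set depending only on $(n,a)$. Now one can legitimately apply Ambro's theorem to $H$ for each of these finitely many possible mld values, bounding $d(H)$, and then $d(G)\mid c_n!\cdot d(H)$ as you observed. The missing ingredient in your proposal is precisely this $h'$-construction pinning down $\operatorname{mld}$ of the abelian piece to a finite set.
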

\begin{proof}
Let $n \in \mathbb{Z}_{>0}$ and $a \in \mathbb{R}_{\ge 0}$. 
Suppose that a variety $X$ of dimension $n$ has a quotient singularity at a closed point $p \in X$. 
Furthermore, assume $\operatorname{mld}_p(X) = a$. 
By the definition of quotient singularities (Definition \ref{defi:quot}), 
there exists a finite subgroup $G \leqslant \operatorname{GL}_n(k)$ such that 
$\widehat{\mathcal{O}}_{X,p} \simeq \widehat{\mathcal{O}}_{M,x_0}$, where 
$M = \mathbb{A}^n _k/G$ is the quotient variety and $x_0 \in M$ is the image of the origin of $\mathbb{A}^n _k$. 
Let $G_{\rm pr} \leqslant G$ be the subgroup generated by the pseudo-reflections in $G$. 
Then, by replacing $G$ with $G/G_{\rm pr}$, we may assume that $G$ does not contain a pseudo-reflection (cf.\ \cite{Kol13}*{3.18}).
Note that the minimal log discrepancy is preserved under  completion (cf.\ \cite{NS2}*{Remark 3.1}). 
Furthermore, the Gorenstein index is also preserved under  completion (cf.\ \cite{dFEM11}*{Proposition A.14}). 
Therefore, we may assume $X = M = \mathbb{A}^n _k/G$ and $p = x_0$ from the beginning. 

By Jordan's theorem (Theorem \ref{thm:Jordan}), we can take an abelian normal subgroup $H \leqslant G$ such that $[G:H] \le c_n$. 
By Proposition \ref{prop:age}, we can take $g \in G$ and $h \in H$ such that 
\[
a = \operatorname{mld}_{x_0} \bigl( \mathbb{A}^n _k/G \bigr) = \operatorname{age}'(g), \quad
\operatorname{mld}_{x_0} \bigl( \mathbb{A}^n _k/H \bigr) = \operatorname{age}'(h). 
\]
We set $c' := c_n !$ and 
\[
h' := h \cdot (g h g^{-1}) \cdot (g^2 h g^{-2}) \cdots (g^{c' - 1} h g^{- (c' - 1)}). 
\]
Since $H$ is a normal subgroup of $G$, we have $g^i h g^{-i} \in H$ for any $i \ge 0$. 
Furthermore, since $[G:H] \le c_n$, we have $g^{c'} \in H$, and hence $g^{c'} h g^{- c'} = g^{c'} g^{- c'} h  = h$. 
Therefore, we have 
\[
g h' g^{-1} = (g h g^{-1}) \cdot (g^2 h g^{-2}) \cdots (g^{c' - 1} h g^{- (c' - 1)}) \cdot h = h'. 
\]

Let $\langle g, h' \rangle \leqslant G$ denote the subgroup generated by $g$ and $h'$. 
Since we have 
\[
\operatorname{age}'(g) = \operatorname{mld}_{x_0} \bigl( \mathbb{A}^n _k/G \bigr) 
= \min _{\alpha \in G} \operatorname{age}'(\alpha) 
\le \min _{\alpha \in \langle g, h' \rangle} \operatorname{age}'(\alpha) 
\le \operatorname{age}'(g), 
\]
it follows that 
\[
\operatorname{mld}_{x_0} \bigl( \mathbb{A}^n _k/\langle g, h' \rangle \bigr)
= \min _{\alpha \in \langle g, h' \rangle} \operatorname{age}'(\alpha)
= \operatorname{age}'(g) 
= a. 
\]
Since $\langle g, h' \rangle$ is an abelian group, by Theorem \ref{thm:abel}, 
we can conclude that $d(\langle g, h' \rangle)$ divides $r'_1(n, a)$ for some positive integer $r'_1(n,a)$ depending only on $n$ and $a$. 
Since $\det (h') = (\det (h))^{c'}$, the order of $\det (h)$ divides $c_n ! \cdot r'_1(n, a)$. 
In particular, $\operatorname{mld}_{x_0} \bigl( \mathbb{A}^n _k/H \bigr) = \operatorname{age}'(h)$ 
is contained in the finite set $\frac{1}{c_n ! \cdot r'_1(n, a)} \mathbb{Z} \cap (0,n]$. 
By applying Theorem \ref{thm:abel} to $H$, we can conclude that 
$d(H)$ divides $r'_2(n, a)$ for some positive integer $r'_2(n,a)$ depending only on $n$ and $a$. 
Since $[G:H] \le c_n$, we have $\alpha ^{c_n !} \in H$ for any $\alpha \in G$. 
Therefore, $d(G)$ divides $c_n ! \cdot d(H)$, and in particular, $d(G)$ divides $c_n ! \cdot r'_2(n,a)$. 
We complete the proof. 
\end{proof}

\begin{rmk}
As a corollary of Theorem \ref{thm:quotp}, 
we could prove Conjecture \ref{conj:index} for surface singularities. 
Note that log terminal surface singularities are quotient singularities (cf.\ \cite{Kaw84}*{Corollary 1.9}). 
More generally, G.\ Chen and J.\ Han in \cite{CH21}*{Theorem 1.4} proved that 
Shokurov's index conjecture for two-dimensional pairs (\cite{CH21}*{Conjecture 6.3}). 
\end{rmk}

\section{Shokurov's Gorensteinness conjecture for quotient singularities}\label{section:GC}

In this section, 
we prove Shokurov's Gorensteinness conjecture (Conjecture \ref{conj:bound}(3)) for quotient singularities. 
This conjecture states that we can take $r(n, n-1) = 1$ in Conjecture \ref{conj:index}. 

\begin{conj}[Shokurov \cite{Sho88}*{Propblem 5}]\label{conj:bound}
Let $X$ be an $n$-dimensional $\mathbb{Q}$-Gorenstein variety, 
and let $p \in X$ be a closed point.
Then the following assertions hold. 
\begin{enumerate}
\item 
We have $\operatorname{mld}_p(X) \le n$. 

\item
If $\operatorname{mld}_p(X) > n-1$, then $X$ is smooth at $p$. 

\item
If $\operatorname{mld}_p(X) = n-1$, then $X$ is Gorenstein at $p$.
\end{enumerate}
\end{conj}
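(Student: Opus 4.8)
The plan is to reduce Conjecture \ref{conj:bound}(3) for quotient singularities to the abelian case, exactly in the spirit of the proof of Theorem \ref{thm:quotp}, and then settle the abelian case by a direct lattice-theoretic computation. So first I would set up the situation: by Definition \ref{defi:quot} and the invariance of both $\operatorname{mld}$ and the Gorenstein index under completion (cited in the proof of Theorem \ref{thm:quotp}), together with the Chevalley--Shephard--Todd reduction in Remark \ref{rmk:p_ref}, I may assume $X = \mathbb{A}^n_k/G$ with $G \leqslant \operatorname{GL}_n(k)$ finite and containing no pseudo-reflection, and $p = x_0$. By Theorem \ref{thm:d} the claim ``$X$ is Gorenstein at $x_0$'' is equivalent to $d(G) = 1$, i.e.\ $\det(g) = 1$ for every $g \in G$; and by Proposition \ref{prop:age} the hypothesis $\operatorname{mld}_{x_0}(X) = n-1$ says $\min_{g \in G} \operatorname{age}'(g) = n-1$.

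The second step is the abelian case. Suppose $G$ is abelian, hence (as in Remark \ref{rmk:age1}) we may take $G$ diagonal and identify $X = U_{\sigma,N}$ with $N = \mathbb{Z}^n + \sum_{g \in G} \mathbb{Z} u^{(g)}$ and $\sigma$ the first-orthant cone; the support function of $K_X$ sends $(a_1,\dots,a_n) \mapsto \sum_i a_i$. For each $g \in G$ write $u^{(g)} = (e_1/d,\dots,e_n/d)$ with $1 \le e_i \le d$, so $\operatorname{age}'(g) = \sum_i e_i/d$ and $\det(g) = \xi^{\sum_i e_i}$. The hypothesis gives $\operatorname{age}'(g) \ge n-1$ for all $g$, i.e.\ $\sum_i (1 - e_i/d) \le 1$; since each $1 - e_i/d \in [0, 1 - 1/d]$, this is a strong constraint. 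I would argue: if some $g$ has $\operatorname{age}'(g) > n-1$ strictly then $g = 1_G$ by a minimality/primitivity argument (this is essentially Conjecture \ref{conj:bound}(2)), so discarding the identity we may assume $\operatorname{age}'(g) = n-1$ exactly, i.e.\ $\sum_i e_i = d(n-1)$ for every nontrivial $g$, equivalently $\sum_i(d - e_i) = d$. Then $\det(g) = \xi^{d(n-1)} = (\xi^d)^{n-1}(\text{trivial}) = 1$ — wait, more carefully $\xi^{d(n-1)} = 1$ since $\xi^d = 1$ — so $\det(g) = 1$ for all $g$, giving $d(G) = 1$ and thus $X$ Gorenstein at $x_0$. (One must also handle the case where the minimum is attained but some other $g$ has larger age; that is where the primitivity of the $u_i$, coming from the no-pseudo-reflection hypothesis, does the work, forcing the age-$>n-1$ element to be the identity.)

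The third step is the reduction from $G$ to an abelian subgroup. Here I would imitate the construction in the proof of Theorem \ref{thm:quotp}: take an abelian normal subgroup $H \leqslant G$ with $[G:H] \le c_n$ (Jordan, Theorem \ref{thm:Jordan}), pick $g \in G$ with $\operatorname{age}'(g) = \operatorname{mld}_{x_0}(X) = n-1$, set $c' = c_n!$ and $h = 1_G$ (or any $h \in H$ realizing $\operatorname{mld}_{x_0}(\mathbb{A}^n_k/H)$), form $h'$ as the product $\prod_{i=0}^{c'-1} g^i h g^{-i}$ which commutes with $g$, and consider $\langle g, h'\rangle$, an abelian subgroup with $\operatorname{mld}_{x_0}(\mathbb{A}^n_k/\langle g,h'\rangle) = n-1$ by the same squeezing inequality as in the cited proof. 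By the abelian case just proved, $\det(g) = 1$ and $\det(h') = 1$. The remaining task is to propagate ``$\det = 1$'' from this cleverly chosen subgroup to all of $G$: since $\det(h') = \det(h)^{c'}$, the order of $\det(h)$ divides $c_n!$; combined with $\operatorname{age}'(h) \in \frac{1}{c_n!}\mathbb{Z}\cap(0,n]$ finite, a further application of the abelian case to $H$ itself — or rather to the subgroups $\langle \gamma \rangle$ for $\gamma \in H$, which as in Proposition \ref{prop:age} control $\operatorname{mld}_{x_0}(\mathbb{A}^n_k/H)$ — should pin down $d(H)$ and hence, via $\alpha^{c_n!} \in H$ for all $\alpha \in G$, bound $d(G)$. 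But for the \emph{Gorensteinness} statement I actually need $d(G) = 1$ exactly, not merely bounded, so the final and most delicate step is to show that the hypothesis $\operatorname{mld}_{x_0}(\mathbb{A}^n_k/G) = n-1$ already forces $\operatorname{age}'(\gamma) \ge n-1$ for \emph{every} $\gamma$ in a suitable abelian subgroup containing $\gamma$ — which follows because $\langle g, \gamma\rangle$ (or $\langle g, \gamma'\rangle$ for the averaged $\gamma'$) again has $\operatorname{mld}_{x_0} = n-1$ by the squeezing argument, so $\det(\gamma') = 1$, and then unwinding $\det(\gamma') = \det(\gamma)^{c'}$ together with the sharp age bound forces $\det(\gamma) = 1$ directly. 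I expect this last propagation — getting the \emph{exact} equality $d(G) = 1$ rather than a mere bound — to be the main obstacle, and it will hinge on combining the sharpness of the age formula (every relevant element has age exactly $n-1$, never strictly between $n-1$ and $n$) with the fact that raising to the $c'$-th power cannot kill a nontrivial determinant once its order is already known to divide $c_n!$ and its age is so constrained.
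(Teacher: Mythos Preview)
Your detour through Jordan's theorem is unnecessary, and the ``propagation'' step you yourself flag as the main obstacle is a real gap: from $\det(\gamma')=\det(\gamma)^{c'}=1$ you only learn that the order of $\det(\gamma)$ divides $c_n!$, and no amount of age bookkeeping on the averaged element $\gamma'$ will force $\det(\gamma)=1$ exactly. The machinery of Theorem~\ref{thm:quotp} was designed to produce a \emph{bound} on the index, not the sharp value $1$, and it does not sharpen here.

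The paper's argument is far more direct and never reduces to the abelian case. For each $g\in G\setminus\{1_G\}$, write its eigenvalues as $\xi^{e_1},\ldots,\xi^{e_n}$ with $1\le e_i\le d$, and let $\ell=\#\{i:e_i<d\}$; since $G$ contains no pseudo-reflection, $\ell\ge 2$. Then
\[
\operatorname{age}'(g)+\operatorname{age}'(g^{-1})
=\sum_{e_i<d}\Bigl(\tfrac{e_i}{d}+1-\tfrac{e_i}{d}\Bigr)+2(n-\ell)
=2n-\ell\le 2(n-1).
\]
Combined with $\operatorname{age}'(g)\ge n-1$ and $\operatorname{age}'(g^{-1})\ge n-1$ from the mld hypothesis, this forces $\operatorname{age}'(g)=n-1$, hence $\sum_i e_i=d(n-1)$ and $\det(g)=\xi^{d(n-1)}=1$. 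This holds for every nontrivial $g$ simultaneously, so $d(G)=1$ by Theorem~\ref{thm:d}. The same inequality $\min\{\operatorname{age}'(g),\operatorname{age}'(g^{-1})\}\le n-\ell/2\le n-1$ also dispatches parts (1) and (2).

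Notice that your own Step~2 already contains the decisive computation $\det(g)=\xi^{d(n-1)}=1$; what you are missing is that the preliminary claim ``$\operatorname{age}'(g)>n-1$ forces $g=1_G$'' is \emph{exactly} the $g\leftrightarrow g^{-1}$ averaging above, and that averaging uses nothing about $G$ except that $g$ is not a pseudo-reflection. Once you see this, the argument applies element by element to all of $G$, and the entire Jordan scaffolding collapses.
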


\begin{rmk}
In \cite{Amb99}, Ambro proved Conjecture \ref{conj:bound}(1) and (2) for toric singularities 
(see Theorem 4.1 and Proposition 4.2 in \cite{Amb99}). 
\end{rmk}

\begin{thm}\label{thm:GC}
Conjecture \ref{conj:bound} is true when $X$ has a quotient singularity at $p$. 
\end{thm}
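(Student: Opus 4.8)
The statement to prove is Conjecture \ref{conj:bound}(1)--(3) for a variety $X$ with a quotient singularity at $p$. As in the proof of Theorem \ref{thm:quotp}, I would first reduce to the model case: after completing and invoking the Chevalley--Shephard--Todd theorem (Remark \ref{rmk:p_ref}), we may assume $X = \mathbb{A}^n_k/G$ and $p = x_0$ for a finite subgroup $G \leqslant \operatorname{GL}_n(k)$ containing no pseudo-reflection, using that both $\operatorname{mld}$ and the Gorenstein index are preserved under completion. By Proposition \ref{prop:age} we then have $\operatorname{mld}_{x_0}(\mathbb{A}^n_k/G) = \min_{g \in G} \operatorname{age}'(g)$, and by Theorem \ref{thm:d} the Gorenstein index at $x_0$ equals $d(G)$. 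So the whole theorem becomes a purely group-theoretic statement about $\operatorname{age}'$ and $d(G)$: writing $a := \min_{g \in G} \operatorname{age}'(g)$, we must show $a \le n$; that $a > n-1$ forces $G = \{1_G\}$; and that $a = n-1$ forces $d(G) = 1$, i.e.\ $\det(g) = 1$ for all $g \in G$.

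\textbf{Key steps.} For (1): each nontrivial $g \in G$ is conjugate to $\operatorname{diag}(\xi^{e_1},\dots,\xi^{e_n})$ with $1 \le e_i \le d$, so $\operatorname{age}'(g) = \sum e_i/d \le n$; taking $g = 1_G$ if one likes gives $\operatorname{age}'(1_G) = n$ (since there $e_i = d$ for all $i$), so in fact $a \le n$ always, with the minimum over all of $G$. For (2): if $a > n-1$ then every $g \in G$ (including the nontrivial ones) satisfies $\sum e_i/d > n-1$, i.e.\ $\sum (d - e_i) < d$; since each $d - e_i \in \{0,1,\dots,d-1\}$, this is a strong constraint. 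I would argue that if $g \ne 1_G$, some eigenvalue is a nontrivial root of unity, say of exact order $m > 1$, so the corresponding $e_i$ contributes $d - e_i \ge d/m \ge d/(\text{ord}(g))$; combined with the other eigenvalues one shows $\sum(d - e_i) \ge d$ unless $g$ is a pseudo-reflection — but we assumed $G$ has none — forcing $g = 1_G$, hence $G = \{1_G\}$ and $X$ smooth. For (3): suppose $a = n - 1$; I want $\det(g) = 1$ for all $g \in G$. Using the relation $\operatorname{age}'(g) + \operatorname{age}'(g^{-1}) = $ (number of non-unit eigenvalues of $g$, each pair $e_i, d - e_i$ summing to $d$) $+ n \cdot(\#\{e_i = d\})/1$ — more precisely, if $g$ has $t$ eigenvalues equal to $1$ then $\operatorname{age}'(g) + \operatorname{age}'(g^{-1}) = n + (n - t)$, so both being $\ge n-1$ forces $n - t \le 2$, i.e.\ $g$ moves at most $2$ coordinates; since $g$ is not a pseudo-reflection it moves exactly $2$ coordinates (or is trivial), with eigenvalues $\xi^{e}, \xi^{d-e}$ on those two and $\det(g) = \xi^e \cdot \xi^{d-e} = \xi^d = 1$. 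That gives $d(G) = 1$.

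\textbf{Main obstacle.} The genuinely delicate point is part (3), and specifically the transition from "every element of $G$ individually satisfies the age bound" to "$\det$ is trivial on all of $G$": the age condition is imposed on $\operatorname{age}'(g)$ for each $g$, but to bound $\operatorname{age}'(g)$ below one must know $a = n-1$ is actually attained and propagate the constraint to every $g$, not just the minimizer. The clean way is the pairing $\operatorname{age}'(g) + \operatorname{age}'(g^{-1}) = n + \#\{\text{non-unit eigenvalues of } g\}$: since both terms are $\ge a = n-1$, we get $\#\{\text{non-unit eigenvalues of }g\} \ge n - 2$ is \emph{false} in general — rather we get it is $\le 2$ only after also using $\operatorname{age}'(g) \le n$; I must be careful with which inequality goes which way. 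Once each non-identity $g$ is pinned down to act nontrivially on exactly a $2$-dimensional subspace with reciprocal eigenvalues there (and trivially elsewhere), triviality of $\det$ is immediate, and then Weston's theorem (Theorem \ref{thm:d}) gives Gorenstein index $d(G) = 1$. I expect the bookkeeping around the exact-order-$m$ eigenvalue estimates and the no-pseudo-reflection hypothesis to be where the real work lies; the rest is formal once the reduction to $\mathbb{A}^n_k/G$ is in place.
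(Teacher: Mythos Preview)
Your reduction to $X = \mathbb{A}^n_k/G$ with $G$ free of pseudo-reflections, and your plan for (1), match the paper. There is, however, a genuine gap in your argument for (2): the claim that $\sum_i (d - e_i) \ge d$ (equivalently $\operatorname{age}'(g) \le n-1$) holds for \emph{every} non-identity $g$ that is not a pseudo-reflection is simply false. Take $n = 2$ and $G = \langle \operatorname{diag}(\xi,\xi)\rangle$ with $\xi$ a primitive $6$th root of unity (so $d = 6$ and $G$ contains no pseudo-reflection); for $g = \operatorname{diag}(\xi^5,\xi^5) \in G$ one has $\sum_i(d - e_i) = 2 < 6$ and $\operatorname{age}'(g) = 5/3 > n-1$. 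Your eigenvalue-order estimate $d - e_i \ge d/\operatorname{ord}(g)$ only yields $\sum_i(d - e_i) \ge \ell \cdot d/\operatorname{ord}(g)$ (with $\ell$ the number of non-unit eigenvalues), which need not reach $d$.

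The fix is precisely the pairing you already invoke for (3), and this is how the paper handles both (2) and (3) in one stroke. For a non-identity $g$ with $\ell$ non-unit eigenvalues one has $\operatorname{age}'(g) + \operatorname{age}'(g^{-1}) = 2n - \ell$; note that your stated formula $n + (n-t)$ with $t$ the number of unit eigenvalues is wrong (the correct identity is $n + t$, since $\ell = n-t$), though your conclusion $n - t \le 2$ happens to be the right one. Since $g$ is not a pseudo-reflection, $\ell \ge 2$, whence $\min\{\operatorname{age}'(g),\operatorname{age}'(g^{-1})\} \le n - \ell/2 \le n-1$, and (2) follows. For (3), if $\operatorname{mld}_p(X) = n-1$ then every non-identity $g$ has $\operatorname{age}'(g),\operatorname{age}'(g^{-1}) \ge n-1$ while their average is $\le n-1$, forcing $\operatorname{age}'(g) = n-1 \in \mathbb{Z}$; then $\det(g) = \xi^{d\cdot\operatorname{age}'(g)} = \xi^{(n-1)d} = 1$, so $d(G) = 1$ and Theorem~\ref{thm:d} (plus Cohen--Macaulayness) finishes. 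Your reciprocal-eigenvalue description is equivalent, but the paper's route via ``$\operatorname{age}'(g)$ is an integer, hence $\det(g)=1$'' is shorter.
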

\begin{proof}
Suppose that $X$ has a quotient singularity at $p$. 
In the same way as in the proof of Theorem \ref{thm:quotp}, 
we may assume that $X$ is of the form $X  = \mathbb{A}^n _k/G$, 
where $G$ is a finite subgroup of $\operatorname{GL}_n(k)$, 
and $p$ is the image $x_0$ of the origin of $\mathbb{A}^n _k$. 
We may also assume that $G$ does not contain pseudo-reflections. 

Conjecture \ref{conj:bound}(1) easily follows from Proposition \ref{prop:age}. 

Suppose that $X$ is singular at $p$. 
Then, it follows that $G \not = \{ 1 _G \}$ since $G$ does not contain pseudo-reflections.  
Take any $g \in G \setminus \{ 1_G \}$. 
Let $d$ be the order of $g$, and let $\xi \in k$ be a primitive $d$-th root of unity. 
Suppose that $g$ is conjugate to the diagonal matrix 
$\operatorname{diag}(\xi ^{e_1}, \ldots , \xi ^{e_n})$ with $1 \le e_i \le d$. 
We may assume that $e_1 \le e_2 \le \cdots \le e_n$. 
Let $\ell$ be the maximum integer satisfying $e_{\ell} < d$. 
Then, we have 
\begin{align*}
\operatorname{age}'(g) &= \frac{e_1}{d} + \cdots + \frac{e_{\ell}}{d} + n - \ell, \\
\operatorname{age}'(g^{-1}) &= \left( 1 - \frac{e_1}{d} \right) + \cdots + \left( 1 - \frac{e_{\ell}}{d} \right) + n - \ell. 
\end{align*}
Note that we have $\ell \ge 2$ since $g$ is not a pseudo-reflection. 
By Proposition \ref{prop:age}, we have 
\[
\operatorname{mld}_p(X) 
\le \min \bigl\{ \operatorname{age}'(g), \operatorname{age}'(g^{-1}) \bigr\}
\le \frac{1}{2} \bigl( \operatorname{age}'(g) + \operatorname{age}'(g^{-1}) \bigr)
= n - \frac{\ell}{2} \le n -1. 
\]
Therefore, Conjecture \ref{conj:bound}(2) is proved. 

Suppose $\operatorname{mld}_p(X) = n-1$. 
Then, by the inequalities above, 
it follows that $\operatorname{age}'(g) = n-1$ for any $g \in G \setminus \{ 1_G \}$. 
Therefore, $K_X$ is Cartier at $p$ by Theorem \ref{thm:d}. 
Since quotient singularities are always Cohen-Macaulay, 
$X$ is Gorenstein at $p$. Hence, Conjecture \ref{conj:bound}(3) is proved. 
\end{proof}

\begin{thm}
Conjecture \ref{conj:bound}(3) is true for toric singularities. 
\end{thm}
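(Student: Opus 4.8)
The plan is to deduce the statement from the simplicial case, where it is already known by Theorem \ref{thm:GC} (a simplicial affine toric variety is an abelian quotient singularity). First I would perform the standard reduction, exactly as in the proof of Theorem \ref{thm:quotp} — using that the minimal log discrepancy and the Cartier index of the canonical divisor at a closed point are preserved under completion — to the case $X = U_{\sigma, N}$, $p = x_0$, where $\sigma \subset N_{\mathbb R} \cong \mathbb R^n$ is a full-dimensional strongly convex rational cone and $x_0$ is its torus-fixed point. Write $v_1, \dots, v_m$ for the primitive ray generators of $\sigma$. The $\mathbb Q$-Gorenstein hypothesis yields a unique $u \in M_{\mathbb Q}$ with $\langle u, v_j \rangle = 1$ for all $j$; this $u$ is the support function of $K_X$ (linear because $X$ is $\mathbb Q$-Gorenstein), the Cartier index $r$ of $K_X$ at $p$ is the least positive integer with $ru \in M$, and $\operatorname{mld}_p(X) = \min\{\langle u, v\rangle : v \in \operatorname{int}(\sigma) \cap N\}$ by the toric computation of Remark \ref{rmk:age1}, whose first step uses only the linearity of the support function and so applies verbatim for non-simplicial $\sigma$ as well (cf.\ also \cite{Amb99}, \cite{CLS}). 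We are given that this minimum is $n-1$, and we must show $r = 1$.

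Since $\sigma$ is full-dimensional, every $v_j$ is a vertex of the $(n-1)$-dimensional polytope $P := \sigma \cap \{\langle u, \cdot \rangle = 1\} = \operatorname{conv}(v_1, \dots, v_m)$, so $n$ of them, say $v_1, \dots, v_n$, can be chosen affinely independent — hence linearly independent, as they lie on a hyperplane off the origin. Set $\tau := \operatorname{cone}(v_1, \dots, v_n) \subseteq \sigma$, a full-dimensional simplicial cone with primitive generators. Then $U_{\tau, N}$ is a simplicial affine toric variety, i.e.\ an abelian quotient singularity at its torus-fixed point $x_\tau$, so Theorem \ref{thm:GC} applies to it. Moreover $u$ is already pinned down by the equations $\langle u, v_i \rangle = 1$ for $i = 1, \dots, n$, so it is also the support function of $K_{U_{\tau, N}}$, and therefore the Cartier index of $K_{U_{\tau, N}}$ at $x_\tau$ is again the least positive integer with $ru \in M$ — that is, it equals $r$ (equivalently, via Theorem \ref{thm:d}, it equals $d(G)$ for the corresponding group $G$).

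Next I would observe that $\operatorname{int}(\tau) \subseteq \operatorname{int}(\sigma)$: a proper face of $\sigma$ has the form $\sigma \cap h^{\perp}$ for some $h \in \sigma^{\vee} \setminus \{0\}$, whereas a point of $\operatorname{int}(\tau)$ is a strictly positive combination of $v_1, \dots, v_n$, and since these span $N_{\mathbb R}$ the values $\langle h, v_i \rangle$ are not all zero, so such a point does not lie on $h^{\perp}$. Applying the toric formula for the minimal log discrepancy at the fixed point to both cones, we get
\[
\operatorname{mld}_{x_\tau}(U_{\tau, N}) = \min\{\langle u, v\rangle : v \in \operatorname{int}(\tau) \cap N\} \ \ge\ \min\{\langle u, v\rangle : v \in \operatorname{int}(\sigma) \cap N\} = \operatorname{mld}_p(X) = n-1.
\]
Now I invoke Theorem \ref{thm:GC} for the quotient singularity $U_{\tau, N}$ at $x_\tau$. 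If $\operatorname{mld}_{x_\tau}(U_{\tau, N}) = n-1$, then $K_{U_{\tau, N}}$ is Cartier at $x_\tau$ (Conjecture \ref{conj:bound}(3)), so $r = 1$. If $\operatorname{mld}_{x_\tau}(U_{\tau, N}) > n-1$, then $U_{\tau, N}$ is smooth at $x_\tau$ (Conjecture \ref{conj:bound}(2)), so again $r = 1$. In either case $K_X$ is Cartier at $p$, which is what we wanted.

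The argument is short once the correct subcone has been isolated, and the only delicate points are bookkeeping ones. The first is the claim that $\tau$ inherits exactly the same Cartier index as $\sigma$; this rests on the observation that the $\mathbb Q$-Gorenstein functional $u$ is determined by any $n$ affinely independent rays, hence is literally unchanged under the passage from $\sigma$ to $\tau$. The second is the inclusion $\operatorname{int}(\tau) \subseteq \operatorname{int}(\sigma)$, which is precisely what forces the minimal log discrepancy not to decrease; one should resist trying to extract this comparison from the natural morphism $U_{\tau, N} \to U_{\sigma, N}$, which in general is not proper and may even contract a divisor. A last routine point to record carefully is that the formula $\operatorname{mld}_{x_0}(U_{\sigma, N}) = \min_{v \in \operatorname{int}(\sigma) \cap N} \langle u, v\rangle$ is valid for the possibly non-simplicial cone $\sigma$ (this amounts to the mld of a toric singularity being computed by toric valuations), although it follows from the same reasoning as the simplicial case treated in Remark \ref{rmk:age1}.
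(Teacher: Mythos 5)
Your proof is correct and follows the same strategy as the paper: construct a full-dimensional simplicial subcone $\tau \subseteq \sigma$ whose rays are among the rays of $\sigma$, so that the same functional $u \in M_{\mathbb{Q}}$ is the support function of $K_{U_\sigma}$ and of $K_{U_\tau}$; use $\operatorname{int}(\tau) \subseteq \operatorname{int}(\sigma)$ to deduce $\operatorname{mld}_{x_\tau}(U_\tau) \ge n-1$; and then apply Theorem \ref{thm:GC} to the abelian quotient singularity $U_\tau$ to conclude that $K_{U_\tau}$ (hence $K_{U_\sigma}$) is Cartier. The one genuine difference is in the reduction step: you first pass, via the product decomposition of $U_{\sigma_p}$ with a torus factor and invariance of mld and Gorenstein index under completion, to the case where $\sigma$ is full-dimensional and $p$ is its torus-fixed point. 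The paper instead replaces $\sigma$ by the face $\sigma_p$ (which may have lower dimension), works with the mld formula of Ambro that includes the correction term $\dim O(\sigma)$, and then argues that $K_{U_\tau}$ is Cartier on all of $U_\tau$ and that $m - m' \in \tau^\perp \cap M_\mathbb{Q} = \sigma^\perp \cap M_\mathbb{Q}$. Your reduction makes both cones full-dimensional, so $\sigma^\perp = \tau^\perp = 0$ and the last step is automatic; this is a slight simplification of the bookkeeping. Your explanation of why $n$ linearly independent primitive ray generators exist (as affinely independent vertices of the polytope $P = \sigma \cap \{\langle u, \cdot\rangle = 1\}$) spells out a point the paper's construction of $\tau$ takes for granted, and your caution against trying to push the mld comparison through the non-proper morphism $U_\tau \to U_\sigma$ is well placed.
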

\begin{proof}
Let $M := \mathbb{Z}^n$, and let $N := \operatorname{Hom}_{\mathbb{Z}}(M, \mathbb{Z})$ be its dual lattice. 
Let $X = U_{\sigma}$ be an $n$-dimensional $\mathbb{Q}$-Gorenstein affine toric variety 
corresponding to a strongly rational polyhedral cone $\sigma \subset N_\mathbb{R}$. 
Let $p \in U_{\sigma}$ be a closed point. 
For a face $\sigma '$ of $\sigma$, $O(\sigma')$ denotes the torus orbit corresponding to $\sigma'$ (cf.\ \cite{CLS}*{Theorem 3.2.6}).  
Let $\sigma _p$ be the face of $\sigma$ corresponding to the torus orbit $T_N \cdot p$ of $p$ (i.e.\ $T_N \cdot p = O(\sigma _p)$). 
Then, $U_{\sigma _p}$ is an open subset of $U_{\sigma}$ containing $p$.  
Therefore, replacing $\sigma$ with $\sigma _p$, we may assume that $T_N \cdot p = O(\sigma)$ from the beginning. 

Suppose that $\sigma$ is generated by primitive vectors $u_1, \ldots , u_{\ell} \in N$. 
Since $U_{\sigma}$ is $\mathbb{Q}$-Gorenstein, the support function $\varphi : \sigma \to \mathbb{R}$ of $K_{U_{\sigma}}$ is given by 
$\varphi (u) = \langle m, u \rangle$ for some $m \in M_\mathbb{Q}$. 

Let $c := \dim \sigma$, and let $\tau \subset N_\mathbb{R}$ be the cone generated by $u_1, \ldots, u_c \in N$. 
Since $\tau$ is a simplicial cone, 
the corresponding toric variety $U_{\tau}$ has abelian finite quotient singularities (cf.\ \cite{CLS}*{Theorem 11.4.8}). 
Furthermore, the same $m \in M_{\mathbb{Q}}$ gives the support function of $K_{U_{\tau}}$. 
Since $T_N \cdot p = O(\sigma)$, we have 
\[
\operatorname{mld}_{p} (U_{\sigma} ) 
= \dim O(\sigma) + \operatorname{mld}_{\eta _{O(\sigma)}} ( U_{\sigma})
= n -c + \min _{u \in \operatorname{int}(\sigma) \cap N} \langle m, u \rangle
\]
by \cite{Amb99}*{Proposition 2.1}. 
Here, $\operatorname{mld}_{\eta _{O(\sigma)}} ( U_{\sigma} )$ is defined as $\inf_{c_{U_{\sigma}}(E) = \overline{O(\sigma)}} a_E(X)$, 
where the infimum is taken over all prime divisors $E$ over $U_{\sigma}$ with center $c_{U_{\sigma}}(E) = \overline{O(\sigma)}$.
Similarly, for any $x_{\tau} \in O(\tau) \subset U_{\tau}$, we have 
\[
\operatorname{mld}_{x_\tau} ( U_{\tau} )
= n - c  + \min _{u \in \operatorname{int}(\tau) \cap N} \langle m, u \rangle. 
\] 
Since $\operatorname{int}(\tau)\subset \operatorname{int}(\sigma)$, we conclude
\[
\operatorname{mld}_{p} ( U_{\sigma} ) \le \operatorname{mld}_{x_\tau} ( U_{\tau} ). 
\]

Suppose $\operatorname{mld}_{p} ( U_{\sigma} )= n-1$. 
Then, we have $\operatorname{mld}_{x_{\tau}} ( U_{\tau}) \ge n-1$ for any $x_{\tau} \in O(\tau)$. 
Therefore, by Theorem \ref{thm:GC}, $K_{U_{\tau}}$ is Cartier at any $x_{\tau} \in O(\tau)$. 
Since $U_{\tau}$ is the only torus invariant open subset of $U_{\tau}$ containing $O(\tau)$, we conclude that 
$K_{U_{\tau}}$ is a Cartier divisor on $U_{\tau}$. 
Hence, there exists $m' \in M$ such that $m'$ gives the support function of $K_{U_{\tau}}$. 
Since $m - m' \in \tau ^{\perp} \cap M _{\mathbb{Q}} = \sigma ^{\perp} \cap M _{\mathbb{Q}}$ (cf.\ \cite{CLS}*{Theorem 4.2.8}), 
$m'$ also gives the support function of $K_{U_{\sigma}}$. 
This proves that $K_{U_{\sigma}}$ is also a Cartier divisor. 
Since toric varieties are always Cohen-Macaulay, 
we conclude that $U_{\sigma}$ is also Gorenstein. 
\end{proof}


\begin{bibdiv}
\begin{biblist*}

\bib{Amb99}{article}{
   author={Ambro, Florin},
   title={On minimal log discrepancies},
   journal={Math. Res. Lett.},
   volume={6},
   date={1999},
   number={5-6},
   pages={573--580},
   doi={\doi{10.4310/MRL.1999.v6.n5.a10}},
}

\bib{Amb09}{article}{
   author={Ambro, Florin},
   title={On the classification of toric singularities},
   conference={
      title={Combinatorial aspects of commutative algebra},
   },
   book={
      series={Contemp. Math.},
      volume={502},
      publisher={Amer. Math. Soc., Providence, RI},
   },
   date={2009},
   pages={1--3},
   doi={\doi{10.1090/conm/502/09852}},
}

\bib{Amb16}{article}{
   author={Ambro, Florin},
   title={Variation of log canonical thresholds in linear systems},
   journal={Int. Math. Res. Not. IMRN},
   date={2016},
   number={14},
   pages={4418--4448},
   doi={\doi{10.1093/imrn/rnv284}},
}

\bib{CH21}{article}{
   author={Chen, Guodu},
   author={Han, Jingjun},
   title={Boundedness of $(\epsilon, n)$-complements for surfaces},
   journal={Adv. Math.},
   volume={383},
   date={2021},
   pages={Paper No. 107703, 40},
   doi={\doi{10.1016/j.aim.2021.107703}},
}

\bib{CLS}{book}{
   author={Cox, David A.},
   author={Little, John B.},
   author={Schenck, Henry K.},
   title={Toric varieties},
   series={Graduate Studies in Mathematics},
   volume={124},
   publisher={American Mathematical Society, Providence, RI},
   date={2011},
   doi={\doi{10.1090/gsm/124}},
}

\bib{dFEM11}{article}{
   author={de Fernex, Tommaso},
   author={Ein, Lawrence},
   author={Musta\c{t}\u{a}, Mircea},
   title={Log canonical thresholds on varieties with bounded singularities},
   conference={
      title={Classification of algebraic varieties},
   },
   book={
      series={EMS Ser. Congr. Rep.},
      publisher={Eur. Math. Soc., Z\"{u}rich},
   },
   date={2011},
   pages={221--257},
   doi={\doi{10.4171/007-1/10}},
}

\bib{Fuj01}{article}{
   author={Fujino, Osamu},
   title={The indices of log canonical singularities},
   journal={Amer. J. Math.},
   volume={123},
   date={2001},
   number={2},
   pages={229--253},
   issn={0002-9327},
	doi={\doi{10.1353/ajm.2001.0010}},
}

\bib{HLL}{article}{
   author={Han, Jingjun},
   author={Liu, Jihao},
   author={Luo, Yujie},
   title={ACC for minimal log discrepancies of terminal threefolds},
   eprint={arXiv:2202.05287v2}
}

\bib{Har77}{book}{
   author={Hartshorne, Robin},
   title={Algebraic geometry},
   note={Graduate Texts in Mathematics, No. 52},
   publisher={Springer-Verlag, New York-Heidelberg},
   date={1977},
   doi={\doi{10.1007/978-1-4757-3849-0}},
}

\bib{Ish00}{article}{
   author={Ishii, Shihoko},
   title={The quotients of log-canonical singularities by finite groups},
   conference={
      title={Singularities---Sapporo 1998},
   },
   book={
      series={Adv. Stud. Pure Math.},
      volume={29},
      publisher={Kinokuniya, Tokyo},
   },
   date={2000},
   pages={135--161},
   doi={\doi{10.2969/aspm/02910135}},
}

\bib{Jor78}{article}{
   author={Jordan, M. Camille},
   title={M\'{e}moire sur les \'{e}quations diff\'{e}rentielles lin\'{e}aires \`a int\'{e}grale
   alg\'{e}brique},
   journal={J. Reine Angew. Math.},
   volume={84},
   date={1878},
   pages={89--215},
   doi={\doi{10.1515/crelle-1878-18788408}},
}

\bib{Kaw15}{article}{
   author={Kawakita, Masayuki},
   title={The index of a threefold canonical singularity},
   journal={Amer. J. Math.},
   volume={137},
   date={2015},
   number={1},
   pages={271--280},
   doi={\doi{10.1353/ajm.2015.0006}},
}

\bib{Kaw84}{article}{
   author={Kawamata, Yujiro},
   title={The cone of curves of algebraic varieties},
   journal={Ann. of Math. (2)},
   volume={119},
   date={1984},
   number={3},
   pages={603--633},
   doi={\doi{10.2307/2007087}},
}

\bib{Kaw92}{article}{
   author={Kawamata, Yujiro},
   title={The minimal discrepancy coefficients of terminal singularities in dimension 3},
   journal={Appendix to V. V. Shokurov, 
   \textit{Three-dimensional log perestroikas}, Izv. Ross. Akad. Nauk Ser. Mat.},
   volume={56},
   date={1992},
   number={1},
   pages={105--203},
}

\bib{Kol13}{book}{
   author={Koll{\'a}r, J{\'a}nos},
   title={Singularities of the minimal model program},
   series={Cambridge Tracts in Mathematics},
   volume={200},
   note={With a collaboration of S\'andor Kov\'acs},
   publisher={Cambridge University Press, Cambridge},
   date={2013},
   doi={\doi{10.1017/CBO9781139547895}},
}

\bib{KM98}{book}{
   author={Koll{\'a}r, J{\'a}nos},
   author={Mori, Shigefumi},
   title={Birational geometry of algebraic varieties},
   series={Cambridge Tracts in Mathematics},
   volume={134},
   publisher={Cambridge University Press, Cambridge},
   date={1998},
   doi={\doi{10.1017/CBO9780511662560}},
}

\bib{Mor22}{article}{
   author={Moraga, Joaqu\'{\i}n},
   title={Small Quotient Minimal Log Discrepancies},
   journal={to appear in Michigan Math. J.},
   eprint={arXiv:2008.13311v1}
}

\bib{Nak16b}{article}{
   author={Nakamura, Yusuke},
   title={On minimal log discrepancies on varieties with fixed Gorenstein
   index},
   journal={Michigan Math. J.},
   volume={65},
   date={2016},
   number={1},
   pages={165--187},
   doi={\doi{10.1307/mmj/1457101816}},
}

\bib{NS22}{article}{
   author={Nakamura, Yusuke},
   author={Shibata, Kohsuke},
   title={Inversion of adjunction for quotient singularities},
   journal={Algebr. Geom.},
   volume={9},
   date={2022},
   number={2},
   pages={214--251},
   doi={\doi{10.14231/ag-2022-007}},
}

\bib{NS2}{article}{
   author={Nakamura, Yusuke},
   author={Shibata, Kohsuke},
   title={Inversion of adjunction for quotient singularities II: Non-linear actions},
   eprint={arXiv:2112.09502v1}
}

\bib{Sho88}{article}{
   author={Shokurov, V. V.},
   title={Problems about Fano varieties},
   journal={Open problems, the XXIIIrd International Symposium, Division of Mathematics, the Taniguchi Foundation, Katata, 1988},
   pages={30--32},
   }

\bib{Sho93}{article}{
   author={Shokurov, V. V.},
   title={Three-dimensional log perestroikas},
   language={Russian},
   journal={Izv. Ross. Akad. Nauk Ser. Mat.},
   volume={56},
   date={1992},
   number={1},
   pages={105--203},
   translation={
      journal={Russian Acad. Sci. Izv. Math.},
      volume={40},
      date={1993},
      number={1},
      pages={95--202},
   },
   doi={\doi{10.1070/IM1993v040n01ABEH001862}},
}

\bib{Sho04}{article}{
   author={Shokurov, V. V.},
   title={Letters of a bi-rationalist. V. Minimal log discrepancies and
   termination of log flips},
   journal={Tr. Mat. Inst. Steklova},
   volume={246},
   date={2004},
   number={Algebr. Geom. Metody, Svyazi i Prilozh.},
   pages={328--351},
   translation={
      journal={Proc. Steklov Inst. Math.},
      date={2004},
      number={3 (246)},
      pages={315--336},
   },
}

\bib{Wes91}{article}{
   author={Weston, Dana},
   title={Divisorial properties of the canonical module for invariant
   subrings},
   journal={Comm. Algebra},
   volume={19},
   date={1991},
   number={9},
   pages={2641--2666},
   doi={\doi{10.1080/00927879108824285}},
}

\bib{Yas06}{article}{
   author={Yasuda, Takehiko},
   title={Motivic integration over Deligne-Mumford stacks},
   journal={Adv. Math.},
   volume={207},
   date={2006},
   number={2},
   pages={707--761},
   doi={\doi{10.1016/j.aim.2006.01.004}},
}

\end{biblist*}
\end{bibdiv}

\end{document}